\documentclass[preprint,12pt]{elsarticle}
\usepackage[T1]{fontenc}

\usepackage{graphicx}
\usepackage{thm-restate}
\usepackage{amsmath}
\usepackage{verbatim}
\usepackage{amssymb}
\usepackage{amsthm}
\usepackage{multirow}

\theoremstyle{definition}
\newtheorem{definition}{Definition}[section]

\theoremstyle{plain}
\newtheorem{prop}{Property}[section]
\newtheorem{theorem}{Theorem}
\newtheorem{conj}{Conjecture}

\newtheorem{corollary}{Corollary}

\usepackage{hyperref}

\newcommand{\cf}[2]{$\mathsf{Conflict}(#1,#2)$}

\usepackage{tikz}
\usetikzlibrary{arrows, quotes}
\usetikzlibrary{arrows.meta}

\begin{document}

\begin{frontmatter}

\title{Antimagic labelling of graphs with maximum degree $\Delta(G) = n-4$}
\author{Grégoire Beaudoire, Cédric Bentz, Christophe Picouleau}

\affiliation{organization={CEDRIC, Conservatoire National des Arts et Métiers},city={Paris}, country={France}}

\begin{abstract}

An antimagic labelling of a graph $G = (V,E)$ is a bijection from $E$ to $\{1,2, \ldots, |E|\}$, such that all vertex-sums are pairwise distinct, where the vertex-sum of each vertex is the sum of labels over edges incident to this vertex. A graph is said to be antimagic if it has an antimagic labelling. It has been proven that graphs $G$ with $\Delta(G) \geq n - 3$ are antimagic, where $\Delta(G)$ is the maximum degree of a vertex in $G$ and $n = |V|$. In this article, we extend this result to graphs with $\Delta(G) = n - 4$, provided that $|E| \geq 7n$.

\end{abstract}

\begin{keyword}

Antimagic labelling  \sep Large maximal degree \sep Edge colouring

\end{keyword}

\end{frontmatter}

\section{Introduction and definitions}

In this paper, we only consider finite, simple and undirected graphs. Let $G = (V,E)$ be a graph, with $|V| = n$ and $|E| = m$. We denote by $d_G(v)$ the degree of a vertex $v \in V$. If the graph $G$ is clear from the context, we will simply write $d(v)$. Let $\Delta(G) = \max\limits_{v \in V} d(v)$.

Two vertices $u,v \in V$ are called \emph{neighbours} if $uv \in E$. We will denote by $\Gamma(u)$ the (open) neighbourhood of $u$, i.e. $\Gamma(u) = \{v | uv \in E\}$.

Given a graph $G = (V,E)$, let $f : E \rightarrow \{1,2,\ldots, m\}$ be a bijective labelling of the edges of $G$. For each vertex $u \in V$, we will write $\sigma(u) = \sum\limits_{v \in V | uv \in E} f(uv)$ the sum of labels over edges incident to $u$. If all the values of $\sigma(u)$ are pairwise distinct, then $f$ is called an \emph{antimagic labelling} of $G$. If $G$ admits at least one antimagic labelling, $G$ is said to be \emph{antimagic}.

Antimagic labelling was originally introduced by Hartsfield and Ringel in 1990 \cite{hartsfield1990}, with the following conjecture:

\begin{conj}
    Every connected graph other than $K_2$ is antimagic.
\end{conj}

The topic is the focus of a chapter of 12 pages in the dynamic survey, updated yearly, on graph labelling by J. Gallian \cite{survey}.

Our paper focuses on graphs with large maximum degree. A first result, easily provable, is the following:

\begin{theorem}\cite{alon2004}
    \label{n-1}
    Graphs $G$ with $\Delta(G) = n - 1$ are antimagic.
\end{theorem}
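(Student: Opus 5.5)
We take a vertex $u$ with $d(u)=n-1$ and set $G' = G - u$, which has $m' = m-(n-1)$ edges. The plan is to label the edges of $G'$ first with the small labels $1,\dots,m'$ arbitrarily, and then use the $n-1$ largest labels $m'+1,\dots,m$ on the star edges $uv$, $v \in V\setminus\{u\}$. Since $u$ is adjacent to every other vertex, each $v\neq u$ receives exactly one star label. This gives full control over the ordering of the sums $\sigma(v)$ for $v \neq u$.

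The key steps are as follows.

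First, fix any bijection $g$ from $E(G')$ to $\{1,\dots,m'\}$, and let $s(v)$ be the partial sum at each $v \neq u$ in $G'$.

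Second, order the vertices $v_1,\dots,v_{n-1}$ of $V\setminus\{u\}$ so that $s(v_1)\le s(v_2)\le \dots \le s(v_{n-1})$. Then put $f(uv_i)=m'+i$, so increasing star labels are matched to increasing partial sums. This gives $\sigma(v_i)=s(v_i)+m'+i$, which is strictly increasing in $i$. Hence the $\sigma(v_i)$ are pairwise distinct.

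Third, we must separate $u$ from the rest. Here $\sigma(u)=\sum_{i=1}^{n-1}(m'+i)$ is the sum of the $n-1$ largest labels. For any $v_i$, $\sigma(v_i)$ is a sum of $d(v_i)\le n-1$ distinct labels. One of these is $f(uv_i)$ and the others are labels of $G'$, which are all smaller than every star label.

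The main obstacle is this last comparison: showing $\sigma(v_i)<\sigma(u)$ strictly. I would argue as follows. The labels at $v_i$ form a set of at most $n-1$ distinct labels. The set of $n-1$ largest labels has the maximum possible sum among such sets, so $\sigma(v_i)\le\sigma(u)$. Equality would force the label set at $v_i$ to be exactly the top $n-1$ labels. But $v_i$ carries only one star label, so this would need $n-1=1$, i.e. $n=2$. In that case $G=K_2$, which is excluded (and $K_1$ is trivial). Thus for $n\ge 3$ all vertex sums are distinct, and $G$ is antimagic.

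One degenerate case needs checking: when $G'$ has no edges, all $s(v)=0$. The ordering argument still works there, so no separate treatment is required.
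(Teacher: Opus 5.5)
Your proof is correct and follows the same approach as the paper: label the edges of $G-u$ arbitrarily with the small labels, then give the $n-1$ largest labels to the star edges in order of increasing partial sums. Your justification of $\sigma(v_i) < \sigma(u)$ (equality would force $v_i$ to carry all $n-1$ star labels) is more careful than the paper's one-line appeal to maximum degree, and it correctly isolates the exception $G = K_2$, which the statement as quoted glosses over.
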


We give the following proof of this theorem, since the core idea described here will be useful to understand our construction later on.

\begin{proof}
    Let $G = (V,E)$ be a graph with a vertex $r$ such that $d(r) = \Delta(G) = n - 1$, meaning that $r$ is a neighbour of every other vertex in $G$. Arbitrarily label every edge $uv \in E$ with $u,v \neq r$, using all the labels in $\{1,2,\ldots,m - (n-1)\}$. Every vertex (different from $r$) has, at this point, all of their incident edges labelled except one. Sort the vertices of $G$ (different from $r$) by increasing value of their partial sum $\sigma'(v_1) \leq \sigma'(v_2) \leq \ldots \leq \sigma'(v_{n-1})$ ($\sigma'(x)$ will denote the partial sum over labelled edges incident to the vertex $x$). We then label the edge $rv_i$ with $m - (n-1) + i$. We obtain $\sigma(v_1) < \sigma(v_2) < \ldots < \sigma(v_{n-1})$. Moreover, $\sigma(v_{n-1}) < \sigma(r)$ since $r$ is a vertex with maximum degree in $G$, and all the edges incident to $r$ are labelled with the largest labels. Overall the labelling is antimagic.
\end{proof}

Alon \emph{et al.}, in 2004, proved the following result \cite{alon2004}:

\begin{theorem}
    Graphs $G$ with $\Delta(G) = n - 2$ and $n \geq 4$ are antimagic.
\end{theorem}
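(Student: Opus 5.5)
We adapt the ordering trick from the proof of Theorem~\ref{n-1}. Let $r$ be a vertex with $d(r)=n-2$, and let $w$ be the unique vertex with $w\neq r$ and $rw\notin E$. Put $S=\Gamma(r)$, so $|S|=n-2$. First we label every edge not incident to $r$ with the small labels $\{1,\dots,m-(n-2)\}$, chosen so that $w$ is controlled. Then we order $S$ by partial sums $\sigma'(v_1)\le\dots\le\sigma'(v_{n-2})$ and give $rv_i$ the label $m-(n-2)+i$. As in Theorem~\ref{n-1}, this makes $\sigma(v_1)<\dots<\sigma(v_{n-2})$. It also gives $\sigma(v_{n-2})<\sigma(r)$ whenever $\Delta=n-2$. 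The argument is: $r$ carries the $n-2$ largest labels, while $v_{n-2}$ has degree at most $n-2$, with one edge among the top labels and the rest smaller. The edge case $m$ small, with ties in degree, is checked directly. What remains is the vertex $w$, whose sum must avoid all the others.

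\textbf{Handling $w$.} The vertex $w$ has all its edges inside the small labels, and its degree is at most $n-2$. We split into two cases according to whether $d(w)$ is small or large.

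\emph{(a) Labels on $w$ smallest.} If we put the labels of edges at $w$ among the smallest ones, then $\sigma(w)$ is small. We want $\sigma(w)<\sigma(v_1)$, using $\sigma(v_1)\ge m-n+3$. Giving $w$'s $d(w)$ edges the labels $1,\dots,d(w)$ gives $\sigma(w)=d(w)(d(w)+1)/2$. This works when $d(w)$ is small relative to $m-n$.

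\emph{(b) Labels on $w$ largest.} Otherwise we give $w$'s edges the largest labels inside $\{1,\dots,m-n+2\}$ and aim for $\sigma(r)>\sigma(w)>\sigma(v_{n-2})$ or a similar insertion. The more robust alternative is this: if $\sigma(w)$ equals some $\sigma(v_i)$, swap the labels of $rv_i$ and $rv_{i+1}$, or swap two labels on non-$r$ edges at $w$ that differ by $1$. Such a swap shifts $\sigma(w)$ by exactly one and touches only one or two other vertices, whose order stays strict because consecutive labels differ by $1$.

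\textbf{Main obstacle.} The hardest step is this final collision repair. We must show that some local perturbation of labels differing by $1$ breaks the equality $\sigma(w)=\sigma(v_i)$ without creating a new tie among the $v_j$ or with $r$. Our first attempt uses the pair of labels $\{m-n+2,\,m-n+3\}$, that is, the largest small label and the smallest $r$-label. We arrange that label $m-n+2$ sits on an edge $wx$. Exchanging it with the label of $rv_1$ changes $\sigma(w)$ by $+1$ and keeps $v_1$ minimal. If that fails, we treat the few small cases ($n\ge4$, small $m$, and $w$ of degree $0$ or $1$) by hand.
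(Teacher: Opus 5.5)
Your setup is sound as far as it goes. Putting the top $n-2$ labels on the edges at $r$ in $\sigma'$-order gives $\sigma(v_1)<\dots<\sigma(v_{n-2})<\sigma(r)$, with no small-case check needed for the last inequality. The bound $\sigma(w)<\sigma(r)$ is automatic, and your case (a) is correct whenever $d(w)(d(w)+1)/2<m-n+3$. Note that the paper itself does not prove this statement; it only cites Alon et al., so your argument has to stand on its own. It does not, because the entire difficulty is the vertex $w$, and there the proposal contains no valid argument.

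Case (b) states an aim rather than a proof. The repair moves also do not do what you claim:
\begin{itemize}
\item Exchanging the labels of $rv_i$ and $rv_{i+1}$ does not change $\sigma(w)$ at all. Neither does exchanging two labels on edges that are both incident to $w$.
\item Consecutive labels at $r$ only guarantee $\sigma(v_{i+1})-\sigma(v_i)\ge 1$, so a $\pm1$ perturbation is exactly what can create a new tie. The exchange $rv_i\leftrightarrow rv_{i+1}$ produces $\sigma(v_i)=\sigma(v_{i+1})$ when that gap is $2$. Raising $\sigma(x)$ by one for $x=v_j$ creates a tie with $v_{j+1}$ when the gap is $1$.
\item The same objections apply to your exchange of the label $m-n+2$ on $wx$ with the label of $rv_1$: both $\sigma(w)+1$ and $\sigma(x)+1$ can land on other sums.
\end{itemize}

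This is not a matter of a few small cases. The framework itself, in which the edges at $r$ carry the $n-2$ largest labels, fails for infinitely many graphs. Take the star with centre $r$ and $n-2$ leaves, add $w$ adjacent to three leaves $a,b,c$, and let $n\ge 11$.
\begin{itemize}
\item Then $m=n+1$, and your scheme forces the labels $1,2,3$ onto the edges at $w$, so $\sigma(w)=6$.
\item Every other leaf has sum equal to its own label. Hence every label in $\{4,\dots,n+1\}$ not used on $ra,rb,rc$ is a vertex sum.
\item So $6$ must be the label of some $rp$. Then $\sigma(p)\in\{7,8,9\}$ must be the label of some $rq$.
\item Next, $\sigma(q)\in\{9,10,11\}$ must be the label of the remaining edge $rs$.
\item Finally $\sigma(s)=6+1+2+3=12$, which is the sum of some leaf. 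So no labelling in your framework is antimagic.
\end{itemize}
Your specific fix gives $\sigma(w)=7$, which is still a leaf sum for every $n\ge 9$.

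The graph is nevertheless antimagic. For example, put $n+1,n,n-1$ on $wa,wb,wc$, put $3,2,1$ on $ra,rb,rc$, and use $4,\dots,n-2$ on the remaining leaves. So a correct proof needs two things your proposal lacks.
\begin{itemize}
\item \emph{Flexibility in placing $w$'s labels.} For sparse graphs, you must allow $w$'s labels to sit above $r$'s.
\item \emph{Built-in slack before any $\pm1$ exchange.} For instance, gaps of at least $2$ between consecutive sums in $\Gamma(r)$, obtained by spacing the labels at $r$. This is the device the paper uses in Section 2 and in the case $i=2$ of Section 4. It requires $m$ large relative to $n$, which is not assumed here.
\end{itemize}
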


The next result over graphs with large maximum degree comes from Yilma, in 2013 \cite{yilma2013}:

\begin{theorem}
    \label{yilma}
    Connected graphs $G$ with $\Delta(G) = n - 3$ and $n \geq 9$ are antimagic.
\end{theorem}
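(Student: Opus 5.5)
The plan follows the template of the proof of Theorem~\ref{n-1}, adapted to the three non-neighbours of a maximum-degree vertex. Let $r$ be a vertex with $d(r)=n-3$. Let $A=\Gamma(r)$, so $|A|=n-3$, and let $B=\{b_1,b_2\}$ be the two vertices not adjacent to $r$. Since $G$ is connected, each $b_i$ has a neighbour, and every path from $b_i$ to $r$ passes through $A$.

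\textbf{Step 1: labelling away from $r$.} Label the edges inside $G-r$ that are not incident to $B$ arbitrarily with the smallest labels. Give the edges incident to $B$ labels chosen so that $\sigma(b_1)$ and $\sigma(b_2)$ end up controlled. The preferred option is to label them early (small labels) so that $\sigma(b_1)\neq\sigma(b_2)$ and both are small. If some $b_i$ has large degree, I instead use a later, specific label, arranged so that its sum cannot coincide with that of a vertex of $A$.

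\textbf{Step 2: the star at $r$.} Sort the vertices of $A$ by partial sums $\sigma'(v_1)\le\dots\le\sigma'(v_{n-3})$. Assign the top $n-3$ labels to the edges $rv_i$ in increasing order, exactly as in Theorem~\ref{n-1}. The $A$-sums are then strictly increasing, and $\sigma(r)$ is the largest sum, because $r$ has maximum degree and receives the largest labels.

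\textbf{Step 3: separating $b_1,b_2$ from $A$.} This is the main obstacle. A vertex $b_i$ may have a sum equal to some $\sigma(v_j)$. My first fix is a swap. Exchange the labels of two consecutive star edges $rv_j$ and $rv_{j+1}$, or of $rv_j$ and an edge of $b_i$, which shifts sums by exactly $1$. Then check that the monotonicity among the $A$-vertices is only locally broken and remains strict. Since only two values must be avoided, there is enough freedom among $n-3$ star edges when $n\ge 9$.

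The delicate cases are when $b_i$ has high degree, or when $b_1b_2\in E$ and some $b_i$ has degree $1$. In these cases a label exchange could create ties. I would handle them by choosing which label among the $\{1,\dots,m-(n-3)\}$ block goes on edges at $B$, for example by reserving the label $m-(n-3)$ or $1$ as a parity or offset adjuster. A short case analysis on $d(b_1)$ and $d(b_2)$ should then guarantee that $\sigma(b_1),\sigma(b_2)\notin\{\sigma(v_j)\}$ and $\sigma(b_1)\neq\sigma(b_2)$.
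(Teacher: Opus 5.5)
Steps~1--2 are the standard star argument and are fine. Step~3, which you rightly call the whole difficulty, fails as described, and everything beyond it is left to an unspecified ``short case analysis''.

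Because the star labels are consecutive integers, consecutive $A$-sums may differ by only $1$. For instance, all vertices of $A$ adjacent only to $r$ have $\sigma'=0$ and get the lowest star labels, so their sums $m-n+4,m-n+5,\dots$ form a run of consecutive integers. This breaks each of your moves:
\begin{itemize}
\item If $\sigma(b_1)=\sigma(v_j)$ lies strictly inside such a run, swapping $rv_j$ with $rv_{j+1}$ or $rv_{j-1}$ only transfers the tie to $v_{j+1}$ or $v_{j-1}$.
\item If $\sigma(v_{j+1})=\sigma(v_j)+2$, that swap creates a tie inside $A$, so the claim that monotonicity stays strict is false.
\item A swap between a star edge and an edge at $B$ changes sums by exactly $1$ only when the two labels are consecutive. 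With small $B$-labels this can happen only for the pair $\{m-n+3,m-n+4\}$, at the bottom of the star.
\end{itemize}
So the count ``two values to avoid, $n-3$ star edges'' does not apply. Only swaps touching $v_j$ or $b_1$ matter, and there are at most three of them.

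Here is a concrete instance. Take $n=20$, $r$ adjacent to $a_1,\dots,a_{17}$, $b_1$ adjacent to $a_1,\dots,a_4$, and $b_2$ adjacent to $a_5$, so $m=22$. The $B$-edges necessarily receive $1,\dots,5$, hence $\sigma(b_1)\in\{10,\dots,14\}$. The pendant vertices $a_6,\dots,a_{17}$ get exactly the sums $6,\dots,17$, in some order. So $\sigma(b_1)$ always ties with one of them, and no permutation of star labels among the pendants changes that. None of your moves breaks the tie: the $\{5,6\}$ swap moves $\sigma(b_1)$, if at all, to another value in the run. This is not one of the situations you flagged as delicate, since $d(b_1)=4$ and $b_1b_2\notin E$.

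The paper does not prove this statement; it quotes it from Yilma. Its treatment of $\Delta(G)=n-4$ nevertheless points to the two ingredients your sketch lacks.
\begin{itemize}
\item The hypothesis $n\ge 9$, which your argument never really uses, is exactly $k=3\le n/3$ in Theorem~\ref{yilmak2}. If $b_1$ and $b_2$ have a common neighbour $y\in A$, then $\Gamma(r)\cup\Gamma(y)=V$ and that theorem finishes. The paper makes the same reduction in Section~2 for its three non-neighbours.
\item In the remaining case, where $b_1,b_2$ have disjoint neighbourhoods in $A$ as in the example, you need slack between consecutive $A$-sums before $\pm1$ exchanges can be controlled. The paper gets this by spacing the star labels (by $4$, which gives Property~\ref{ecartv}) and placing the non-neighbours' edges in the gaps. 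It then checks case by case that a suitable exchange creates no new tie.
\end{itemize}
That spacing device needs many edges: the paper assumes $m\ge 7n$, and $d'(u_3)\ge 4$ in its main case. The hypothesis here gives nothing of the kind. So the sparse case with no common neighbour still needs a real argument, and your proposal does not contain one.
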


In the same paper, they also proved the following theorem:

\begin{theorem}
    \label{yilmak2}
    Let $G = (V,E)$ be a graph, and $x$ a vertex such that $d(x) = \Delta(G) = n - k$, with $k \leq \frac{n}{3}$. Let $y$ be a neighbour of $x$ such that $\Gamma(x) \cup \Gamma(y) = V$. Then $G$ is antimagic.
\end{theorem}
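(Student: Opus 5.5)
The plan is to generalise the proof of Theorem~\ref{n-1}: the edges incident to $x$, and then those incident to $y$, will receive the largest labels, and the order in which these labels are assigned will separate the vertex-sums. Write $A = \Gamma(x)$, so $|A| = n-k$, and $B = V \setminus (A \cup \{x\})$. Since $\Gamma(x)\cup\Gamma(y)=V$, every vertex of $B$ is adjacent to $y$, and $y \in A$. Also $|B| = k-1$.

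Let $E_x$ be the $n-k$ edges at $x$ and $E_y$ the $k-1$ edges $yb$ with $b\in B$. The remaining edges $E_0$ get labels $\{1,\dots,m-(n-1)\}$ arbitrarily. I will then label the edges of $E_y$ with the next $k-1$ labels, and the edges of $E_x$ with the top $n-k$ labels. Two refinements are needed.

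\textbf{Step 1: handling $B$.} Every vertex of $B$ has exactly one unlabelled edge after $E_0$ is labelled, namely its edge to $y$. Sort $B$ by partial sums and assign the labels of $E_y$ increasingly. This makes the sums in $B$ strictly increasing, exactly as in Theorem~\ref{n-1}.

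\textbf{Step 2: handling $A$.} Every vertex of $A$ other than $y$ has exactly one unlabelled edge, the one to $x$. Vertex $y$ also has its $E_y$ edges, but these are already labelled by then. Sort all of $A$ (including $y$) by current partial sum and give $xv$ the increasing top labels. This makes the sums in $A$ pairwise distinct. Vertex $x$ has maximum degree and carries the top $n-k$ labels, so $\sigma(x)$ exceeds every other sum, as in Theorem~\ref{n-1}. This needs some care when a vertex of $A$ has degree $\Delta$ too; one may need to swap $x$'s smallest label with another to keep strictness, but the degree comparison should give $\sigma(x)\ge \sigma(v)$, with equality only in a rigid configuration that a local swap removes.

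\textbf{Step 3: the main obstacle, conflicts between $A$ and $B$.} A vertex $a\in A$ and a vertex $b\in B$ might have equal sums. Here I would use the slack from $k\le n/3$. Each $a\in A$ receives one label from the top block, of size at least $m-(n-k)+1$. Each $b\in B$ receives only labels at most $m-(n-k)$. That alone does not separate them, since $b$ may have large degree.

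I would instead fix conflicts afterwards by a pairing argument. There are at most $k-1$ vertices in $B$, and each can collide with at most one vertex of $A$, since sums in $A$ are distinct. Swapping the labels of $E_x$ on two vertices of $A$ that are consecutive in the order, among the $n-k\ge 2k$ available, changes two sums by $\pm1$. Alternatively, I could shift the block: choose an offset so that the labels used for $E_x$ become $\{m-(n-1)+(k-1)+1,\dots\}$ in a cyclically rotated assignment. Each rotation moves every $A$-sum, and the set of bad rotations is bounded by $(k-1)$ times a small constant, while the number of candidate rotations is about $n-k\ge 2k$. A counting argument then finds a conflict-free choice. The delicate part is guaranteeing that the rotation preserves distinctness within $A$ and the dominance of $\sigma(x)$. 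I would try adjacent transpositions in the sorted order, since these keep distinctness unless two partial sums differ by exactly $1$, and then count how many transpositions each $b$ can spoil.
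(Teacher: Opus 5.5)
The paper does not prove this statement (it is quoted from Yilma~\cite{yilma2013}), so I assess your argument on its own terms.

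\textbf{Steps~1 and~2.} These are correct, and the worry at the end of Step~2 is unnecessary. For $v\in A$, the at most $n-k-1$ labels at $v$ other than $f(xv)$ are all at most $m-(n-k)$. The $n-k-1\ge 1$ other labels at $x$ all exceed $m-(n-k)$. A vertex of $B$ carries at most $n-k$ labels, all at most $m-(n-k)$. So $\sigma(x)$ is strictly maximal for every permutation of the labels on $E_x$, and no swap is needed.

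\textbf{The gap is Step~3.} This step is the whole content of the theorem, and it is only sketched. Worse, every repair you propose (transpositions, rotations) only permutes the top block on $E_x$. That leaves $E_0$, $E_y$, and hence all $B$-sums fixed, and this is provably not enough. Here is a counterexample:
\begin{itemize}
\item Take $k=3$ and $n\ge 9$: $x$ is adjacent to $y$ and to $n-4$ leaves, plus there is a triangle $yb_1b_2$.
\item Then $m=n$, $E_0=\{b_1b_2\}$ gets label $1$, and $E_y$ gets $\{2,3\}$, so $\{\sigma(b_1),\sigma(b_2)\}=\{3,4\}$.
\item $E_x$ gets $\{4,\dots,n\}$, each leaf's sum is its own label, and $\sigma(y)=5+f(xy)$.
\item If $f(xy)\neq 4$, some leaf has sum $4$, equal to a $B$-sum. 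If $f(xy)=4$, then $\sigma(y)=9$, which is the label, hence the sum, of some leaf.
\end{itemize}
So no permutation of the labels on $E_x$ yields an antimagic labelling here. The repair must move labels between $E_x$ and $E_y$ (or $E_0$).

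Your rotation count is also wrong. Each $a\in A$ receives every top label under exactly one of the $n-k$ rotations. Hence a single $b$ can spoil up to $n-k$ rotations, not $O(1)$. Consequently the hypothesis $k\le n/3$ never genuinely enters your argument, which is itself a warning sign.

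\textbf{What is missing} is to let $A$ and $B$ compete for the same labels. Since $xy$ is a dominating edge, with $F=E_x\cup E_y$ every vertex of $V\setminus\{x,y\}$ lies on exactly one edge of $F$.
\begin{itemize}
\item Label $E\setminus F$ with $\{1,\dots,m-n+1\}$.
\item Sort the $n-2$ vertices of $V\setminus\{x,y\}$ by partial sum.
\item Give their $F$-edges increasing labels from the top $n-1$, with the remaining one going on $xy$.
\end{itemize}
As in the proof of Theorem~\ref{n-1}, all sums on $V\setminus\{x,y\}$ are then distinct at once, whether the vertex lies in $A$ or in $B$. A count like yours shows that $\sigma(x)$ exceeds all of them.

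The price is that the edges from $y$ to $B$ may now receive the very largest labels. What remains is to keep $\sigma(y)\neq\sigma(x)$ and to separate $\sigma(y)$ from the other sums. For this you can use the tie-breaking in the sort and the choice of $f(xy)$. This is where $k\le n/3$ can do real work: it is equivalent to $n-k-1\ge 2(k-1)+1$. That is, apart from $xy$, the vertex $x$ carries more than twice as many top labels as $y$ does.
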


Since 2013, to our knowledge, there has been no advances over antimagic property for graphs with large maximum degree. There have been results over graphs with specific properties (for instance, regular graphs are antimagic \cite{Berczi2015} \cite{chang2015}) but the following conjecture, introduced by Alon \emph{et al.} in 2004, is still open:

\begin{conj}
    \label{conjalon}
    Let $G = (V,E)$ be a connected graph, with a vertex $x$ such that $d(x) = \Delta(G) = n - k$ for some $k \geq 4$. Then, if $n \geq n_0(k)$ for some $n_0(k)$, $G$ is antimagic.
\end{conj}

In this paper, we extend the results over graphs with a large maximum degree by proving the following theorem:

\begin{theorem}
    \label{n-4}
    Let $G = (V,E)$ be a connected graph, with $\Delta(G) = n - 4$ and $m \geq 7n$. Then $G$ is antimagic.
\end{theorem}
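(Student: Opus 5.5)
Let $x$ be a vertex with $d(x)=\Delta(G)=n-4$, and let $A=\{a_1,a_2,a_3\}$ be the three vertices other than $x$ that are not adjacent to $x$; write $N=\Gamma(x)$. The plan generalizes the proof of Theorem~\ref{n-1}. The edges incident to $x$ receive the largest labels $m-(n-4)+1,\ldots,m$, and a carefully chosen set of edges receives the next largest labels. These late labels are used to adjust the vertex sums of $A$ and to separate the sums inside $N$ by sorting.

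First, I would dispose of the easy configuration with Theorem~\ref{yilmak2}. If some neighbour $y\in N$ of $x$ is adjacent to all of $A$, then $\Gamma(x)\cup\Gamma(y)=V$. Moreover $k=4\le n/3$, since $m\ge 7n$ forces $n$ to be large (at least $15$). So $G$ is antimagic. From now on I assume that every vertex of $N$ misses at least one vertex of $A$.

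Since $m\ge 7n$ and degrees are at most $n-4$, the vertices of $A$ are not all of low degree. More importantly, there are many edges inside $N$, and there are matchings from $A$ into $N$. I would then select, for each $a_i$, a set of edges $M_i$ joining $a_i$ to $N$; if $a_i$ has few neighbours in $N$, it has neighbours in $A$ and I use those instead. The set $M=\bigcup M_i$ is labelled with the block of labels just below those of $x$, and the remaining edges (inside $N$, inside $A$, and any other $A$--$N$ edges) are labelled arbitrarily with the smallest labels.

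The assignment order would be as follows. Label the $M$-edges so that the sums $\sigma(a_1),\sigma(a_2),\sigma(a_3)$ become pairwise distinct and, ideally, larger than every sum in $N$ or placed in gaps of it. Then, as in Theorem~\ref{n-1}, sort the vertices of $N$ by their partial sums and assign the labels on the edges $xv$ in increasing order. This makes the sums in $N$ strictly increasing, and $\sigma(x)$ is the maximum because $x$ has maximum degree and carries the top labels. The remaining conflicts can only be between some $a_i$ and some $v\in N$, or among the $a_i$. To handle them, I would reserve swapping freedom: exchanging two consecutive labels on $x$-edges changes two sums in $N$ by $\pm1$, and exchanging labels on two $M$-edges at $a_i$ shifts $\sigma(a_i)$ by small controlled amounts. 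A counting argument then shows that some choice avoids all at most three bad coincidences. The hypothesis $m\ge 7n$ gives enough slack in the label ranges for the $a_i$ to be adjusted within intervals longer than the number of possible collisions.

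The main obstacle is a vertex $a_i$ of small degree, for example a leaf-like vertex of $A$. Its sum is then small and may collide with sums in $N$ that sorting cannot move without breaking the ordering. I would treat this case separately. I would give such a vertex's edge a label in a range where it is provably below every $\sigma(v)$, $v\in N$. This works because each $v\in N$ receives an $x$-label of size at least $m-n+5$, so $\sigma(v)>m-n$, while a degree-one vertex of $A$ can be given any label up to $m-n$. For higher-degree $a_i$, I would rely on the perturbation and counting argument above. I expect the degree case analysis on $A$ (degrees $1,2,\ge3$, and adjacencies within $A$) to be the lengthy part.
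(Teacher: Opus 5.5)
Your outline follows the paper's overall strategy: reduce via Theorem~\ref{yilmak2}, give the edges at the maximum-degree vertex the top labels, sort $N$ by partial sums, then repair the few $A$--$N$ conflicts by local exchanges and counting. The repair step, which is the heart of the proof, has a genuine gap. You give the edges $xv$ the \emph{consecutive} labels $m-n+5,\ldots,m$, so sorting only guarantees $\sigma(v_{j+1})\geq\sigma(v_j)+1$. When partial sums tie, the sums in $N$ form runs of consecutive integers. Nothing in your construction stops $\sigma(a_i)$ from landing inside such a run: for moderate $d(a_i)$ it is roughly $d(a_i)\,m$, well inside the range of $N$-sums, so ``larger than every sum in $N$ or in gaps'' cannot be arranged. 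Your moves then fail. As literally stated, exchanging two labels on edges both at $a_i$ leaves $\sigma(a_i)$ unchanged. Exchanging the consecutive $x$-labels of $v_j,v_{j+1}$ merely swaps their two sums when their partial sums tie, and creates a new tie when the partial sums differ by exactly $1$. Shifting $\sigma(a_i)$ by $\pm1$ inside a run just hits the next value of the run. Any exchange that moves an $N$-sum by $\pm1$ may collide with its sorted neighbour. So it is not true that only ``at most three bad coincidences'' must be avoided, since every candidate exchange may be bad. The appeal to slack from $m\geq 7n$ is also unsupported, because relabelling $A$--$N$ edges moves the very $N$-sums you are trying to avoid.

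The missing idea is to build in room before repairing. The paper labels the edges at $r$ with $m,m-4,\ldots,m-4(n-5)$, so consecutive sums in $H$ differ by at least $4$. The labels of each gap $\{m-4j-1,m-4j-2,m-4j-3\}$ ($j\geq 0$) go on one edge at each of $u_1,u_2,u_3$, largest at $u_1$, which also separates $\sigma(u_1),\sigma(u_2),\sigma(u_3)$ by at least $4$. Since $r$ no longer carries the top $n-4$ labels, the maximality of $\sigma(r)$ is not automatic. The paper recovers it with a K\"onig edge-colouring of a bipartite graph with $d(u_3)$ edges at each $u_i$, plus Vizing on the remaining edges. This makes the three edges sharing a gap a matching and gives each $v\in H$ at most one label per gap; it also uses that no $v$ is adjacent to all three $u_i$. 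With these gaps, each exchange of adjacent labels shifts sums by $\pm1$ without creating conflicts inside $H$. There are four parallel exchanges of each type, and each possible failure rules out at most one of them; this is what makes the counting, and the two-exchange combinations needed in some cases, go through. The scheme requires $d'(u_3)\geq 4$. Vertices with $d'(u_i)\leq 3$, not only leaves, are given the smallest labels, with separately spaced schemes for the remaining $u_i$.
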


The rest of the article is mostly devoted to the proof of Theorem \ref{n-4}. In Section $2$, we explain the basic algorithm to label the graph, and we show some properties of this labelling. In Section $3$, we explain how to exchange some labels in the general case in order to obtain an antimagic labelling. In Section $4$, we show how to deal with specific cases not included in Section $2$ and $3$. Those three sections make up together the proof of Theorem \ref{n-4}. In Section 5, we extend Theorem \ref{n-4} to non-connected graphs. Finally, we give some open questions related to this work in the conclusion.

\section{Labelling process}

Let $G = (V,E)$ be a connected graph with a vertex $r$ such that $d(r) = \Delta(G) = n - 4$. We will call $u_1,u_2,u_3$ the three vertices of $V$ that are not neighbours of $r$, and $H$ the subgraph induced by $V \setminus \{r,u_1,u_2,u_3\}$. We will denote by $n_H$(respectively $m_H$) the number of vertices (respectively the number of edges) of $H$. We will denote by $d'(u_1)$ (respectively $d'(u_2),d'(u_3)$) the number of edges $u_1v$ (respectively $u_2v$, $u_3v$), with $v \in H$. We assume that $d(u_1) \geq d(u_2) \geq d(u_3)$, and that if $d(u_i) = d(u_{i+1})$ for some $i$, then $d'(u_i) \geq d'(u_{i+1})$.

Notice that this order implies that $d'(u_1) \geq d'(u_2) \geq d'(u_3)$. Indeed, if there exists $i$ such that $d'(u_{i+1}) > d'(u_i)$, since $d(u_{i+1}) \leq d(u_i)$ and since the difference between the degree of $u_i$ and the degree of $u_{i+1}$ in the subgraph induced by $\{u_1,u_2,u_3\}$ is at most $1$, we obtain $d(u_{i+1}) = d(u_i)$. However, this implies $d'(u_{i+1}) \leq d'(u_i)$, and we obtain a contradiction.

Notice that $m \geq 7n$ induces a lower bound for $n$. Indeed, we have $m_H \geq 7n - 4(n-4)$ since the vertices $r,u_1,u_2,u_3$ all have degree at most $n - 4$. We also have $m_H \leq \frac{1}{2}(n-4)(n-5)$ since $H$ is made of $n-4$ vertices with degree at most $n-5$ (since $\Delta(G) = n - 4$ and all the vertices of $H$ are neighbours of $r$). By combining the two inequalities, we obtain:

\begin{align*}7n-4(n-4) &\leq \frac{1}{2}(n-4)(n-5) \\
    6n + 32 &\leq n^2 -9n + 20 \\
    0 & \leq n^2 -15n - 12
\end{align*}

Hence $n \geq 16$.

Note that, if there exists a vertex $v \in H$ such that $v$ is a neighbour of all three $u_1,u_2,u_3$, then Theorem \ref{yilmak2} applies if $n \geq 12$ (which is the case here, as we just showed), and $G$ is antimagic. We will then assume in the following that there is no such vertex.

Throughout Sections 2 and 3, we assume that $d'(u_1) \geq d'(u_2) \geq d'(u_3) \geq 4$, and that the graph induced by $\{u_1,u_2,u_3\}$ is an independent set. We show in Section 4 how to obtain an antimagic labelling if those conditions are not met. At any point during the labelling of a graph, for each vertex $u$, we will denote by $\sigma'(u)$ the partial sum over labelled edges incident to $u$. Once the labelling is done, all edges are labelled and $\sigma'(u) = \sigma(u)$.

We now describe the first stage of the labelling process. We partition $E = E_1 \cup E_2$ such that $E_1$ is the set of edges incident to $r$. We will first label $E_2$, then $E_1$, by reserving the following labels:

\begin{itemize}
    \item Edges in $E_1$ will be labelled with $\{m, m-4, m-8, \ldots, m-4(n-5)\}$.

    \item $d(u_3)$ edges incident to $u_1$ (recall that there are at least $4$) will be labelled with $\{m-1,m-5,m-9,\ldots, m-4(d(u_3)-1)-1\}$.

    \item $d(u_3)$ edges incident to $u_2$ will be labelled with $\{m-2,m-6,m-10,\ldots,m-4(d(u_3)-1)-2\}$.

    \item The $d(u_3)$ edges incident to $u_3$ will be labelled with $\{m-3,m-7,\ldots,m-4(d(u_3)-1)-3\}$.

    \item The other edges in $E_2$ will be labelled with the unreserved labels.
\end{itemize}

Note that since $m \geq 7n$, it is always possible to reserve these labels.

We additionally want to guarantee that $\sigma(r)$ is maximal in $G$ once the labelling is done. To this end, one can first notice that the reservation of labels $\{m,m-4,\ldots,m-4(n-5)\}$ for the edges in $E_1$ induces the creation of intervals $I_1 = \{m-1,m-2,m-3\}$, $I_2 = \{m-5,m-6,m-7\}, \ldots, I_{n-5} = \{m-4(n-6) - 1, m-4(n-6) - 2, m-4(n-6) - 3\}$. We will assign the corresponding labels such that each vertex $v \in V \setminus \{r\}$ has at most one incident edge labelled in each $I_j$.

In order to do this, let us consider the bipartite graph $G_1 = (V_{G_1},E_{G_1})$ defined as follow:

\begin{itemize}
    \item $V_{G_1} = V \setminus \{r\}$,

    \item $u_3$ has all its $d(u_3)$ incident edges in $E_{G_1}$,

    \item We arbitrarily select $d(u_3)$ edges incident to $u_1$ and $d(u_3)$ edges incident to $u_2$, that we put in $E_{G_1}$.
\end{itemize}

$G_1$ is well defined since $d(u_1) \geq d(u_2) \geq d(u_3)$. We have $|E_{G_1}| = 3d(u_3)$. Moreover, $\Delta(G_1) = d(u_3)$ since every vertex $v \in H$ is such that $d_{G_1}(v) \leq 3$ and $d'(u_3) = d(u_3) \geq 4$. Hence, Theorem 6.1.5 in \cite{west} (originally proven by König) shows that there exists a proper colouring of the edges of $G_1$ with $d(u_3)$ colours. Since $d_{G_1}(u_1) = d_{G_1}(u_2) = d_{G_1}(u_3) = d(u_3)$, this colouring is made of colour classes $C_1,C_2,\ldots, C_{d(u_3)}$ such that each $C_i$ contains exactly $3$ edges.

For $1 \leq j \leq d(u_3)$, we associate to the three edges of $C_j$ the interval $I_j = \{m-4(j-1)-1,m-4(j-1)-2,m-4(j-1)-3\}$, such that $m - 4(j-1) - 1$ is assigned to the edge incident to $u_1$ in $C_j$, $m-4(j-1)-2$ is assigned to the edge incident to $u_2$ in $C_j$, and $m-4(j-1)-3$ is assigned to the edge incident to $u_3$ in $C_j$.

Let us now consider the graph $G_2 = (V \setminus \{r,u_3\},E_{G_2})$, where $E_{G_2} = E_2 \setminus E_{G_1}$ is the set of edges $e \not\in E_{G_1}$, and $e$ is not incident to $r$.

Since $\Delta(G_2) \leq n - 5$, thanks to Vizing's theorem (Theorem 6.1.7 in \cite{west}), there exists a proper edge-colouring of $G_2$ with at most $\Delta(G_2) + 1 \leq n - 4$ colours. Let $C'_1, C'_2, \ldots, C'_{n-4}$ be the colour classes for this colouring. Since $m_{G_2} = m - 3d(u_3) - (n-4) \geq 3n$ (since $m \geq 7n$), we can assume that there are at least $3$ edges in each class. Indeed, let us suppose that there exists a class $C'_i$ such that $|C'_i| \leq 2$. Then there exists a class $C'_j$ such that $|C'_j| > 3$. Let $H_{i,j}$ be the partial graph of $H$ obtained by taking only the edges with colours $i$ and $j$. $H_{i,j}$ is a collection of even-length alternating cycles and alternating paths. Since $|C'_j| > |C'_i|$, there exists at least a path $P_{i,j}$ such that its first and last edges have colour $j$ (the two edges might be the same). By exchanging the colours $i$ and $j$ in $P_{i,j}$, we increase $|C'_i|$ by $1$, and we can repeat the process until $|C'_i| \geq 3$ for each colour class $C'_i$. We can then assume in the following that there are at least $3$ edges in every class. Finally, we sort the classes such that the $d(u_1) - d(u_3)$ edges incident to $u_1$ in $G_2$ (recall that $d(u_1) \geq d(u_3)$) are in the classes $C'_1, C'_2, \ldots, C'_{d(u_1) - d(u_3)}$.

For $d(u_3) + 1 \leq j \leq d(u_1)$, we associate three edges of the class $C'_{j-d(u_3)}$ to the interval $I_j$, such that $m - 4(j-1) - 1$, i.e. the largest label in $I_j$, is assigned to the edge incident to $u_1$. The other two labels in $I_j$ are arbitrarily assigned to two other edges in the class.

For $d(u_1) + 1 \leq j \leq n - 5$, we associate three edges of the class $C'_{j - d(u_3)}$ to the interval $I_j$.

Once every label of the intervals $I_j$, for $1 \leq j \leq n - 5$, has been assigned to an edge, we arbitrarily label the remaining edges of $E_{G_2}$ (note that these edges have either both endpoints in $H$, or one endpoint in $H$ and the other endpoint is $u_2$). We then sort the vertices of $H$ by increasing order of $\sigma'$: $\sigma'(v_1) \leq \sigma'(v_2) \leq \ldots \leq \sigma'(v_{n-4})$, similarly to the proof of Theorem \ref{n-1}. We then label the $rv_i$ edges in this order, using the labels reserved for $E_1$ in increasing order as well. This concludes the first stage of the labelling of $G$.

We now aim to show some properties of our labelling. First, we have $\sigma(u_2) \geq \sigma(u_3) + 4$ due to the labelling of $G_1$ and due to $d(u_2) \geq d(u_3) \geq 4$. Moreover, $\sigma(u_1) \geq \sigma(u_2) + 4$ due to the labelling of $G_1$ and $G_2$. Overall, we obtain:

\begin{prop}
    \label{ecartu}
    $\sigma(u_3) + 4 \leq \sigma(u_2)$ and $\sigma(u_2) + 4 \leq \sigma(u_1)$.
\end{prop}

Furthermore, every vertex $v \in V \setminus \{r\}$ has at most one incident edge labelled in every interval $I_j$, $1 \leq j \leq n - 5$, since the three labels of $I_j$ were assigned to three edges in the same colour class. Moreover, recall that $v$ cannot be a neighbour of all three $u_1,u_2,u_3$; in other words, $d_{G_1}(v) \leq 2$. This means that, for any $v \in H$:

\begin{align*}
        \sigma(v) \leq m &+ (m-1) + (m - 6) + (m-17) + (m-21) \\
        & +\cdots + (m - 4(n-6) - 1) + (m - 4(n-5) - 1) + (m - 4(n-5)-2)
\end{align*}

The $m$ term is an upper bound of the label of $rv$. The $(m-1) + (m-6)$ term is an upper bound of the sum of labels that can be assigned to edges incident to $v$ during the labelling of $G_1$. $G_1$ has at least $12$ edges since $d(u_3) \geq 4$, hence the largest label that can be assigned to an edge incident to $v$ during the labelling of $G_2$ is $m - 17$ (since $d(r) = n - 4 \geq 12$).

We also have that:

\begin{align*}
        \sigma(r) & = m + (m-4) + (m-8) + (m - 12) + (m-16) + (m-20) \\
        &+ \cdots + (m-4(n-6)) + (m - 4(n-5))\\
        \sigma(u_1) &\leq (m-1) + (m-5) + \cdots + (m - 4(n-5) - 1) \\
        \sigma(u_3) & < \sigma(u_2) < \sigma(u_1)
\end{align*}

Since $n \geq 16$, we obtain that:

\begin{prop}
    \label{ecartracine}
    For each vertex $x \in V \setminus \{r\}$, $\sigma(r) \geq \sigma(x) + 4$.
\end{prop}

Furthermore, since we have reserved labels spaced by 4 for edges of $E_1$, and since we sorted vertices in $H$ in increasing order of $\sigma'$ before labelling those edges, we obtain the following property:

\begin{prop}
    \label{ecartv}
    For each $v_i \in H, 1 \leq i \leq n - 5$, $\sigma(v_i) + 4 \leq \sigma(v_{i+1})$.
\end{prop}

These three properties will be used in Section 3, where we will exchange some labels to obtain an antimagic labelling. Note that thanks to Property \ref{ecartracine}, we will ignore in the following all changes made to the value of $\sigma(r)$, as the property $\sigma(r) > \sigma(v)$ for any $v \neq r$ still holds true afterwards. We will justify this hypothesis at the end of Section 3.

An illustration of the labelling of $G$ is shown in Figure \ref{etiquetageinitial}. For every $i$, we will call $y_i$ the neighbour of $u_j$ (for some $j$) such that the edge $u_jy_i$ is labelled by $m - i$. Notice that, for instance, $y_1$ and $y_6$ can be the same vertex in Figure \ref{etiquetageinitial}. Moreover, for every $i$, we will call $w_i$ the neighbour of $r$ such that $rw_i$ is labelled by $m - i$. Let us call $Y$ the set of vertices $y_i$ and $W$ the set of vertices $w_i$. We have $Y \subseteq W = \{v_1,v_2,\ldots,v_{n-4}\} = V_H$.

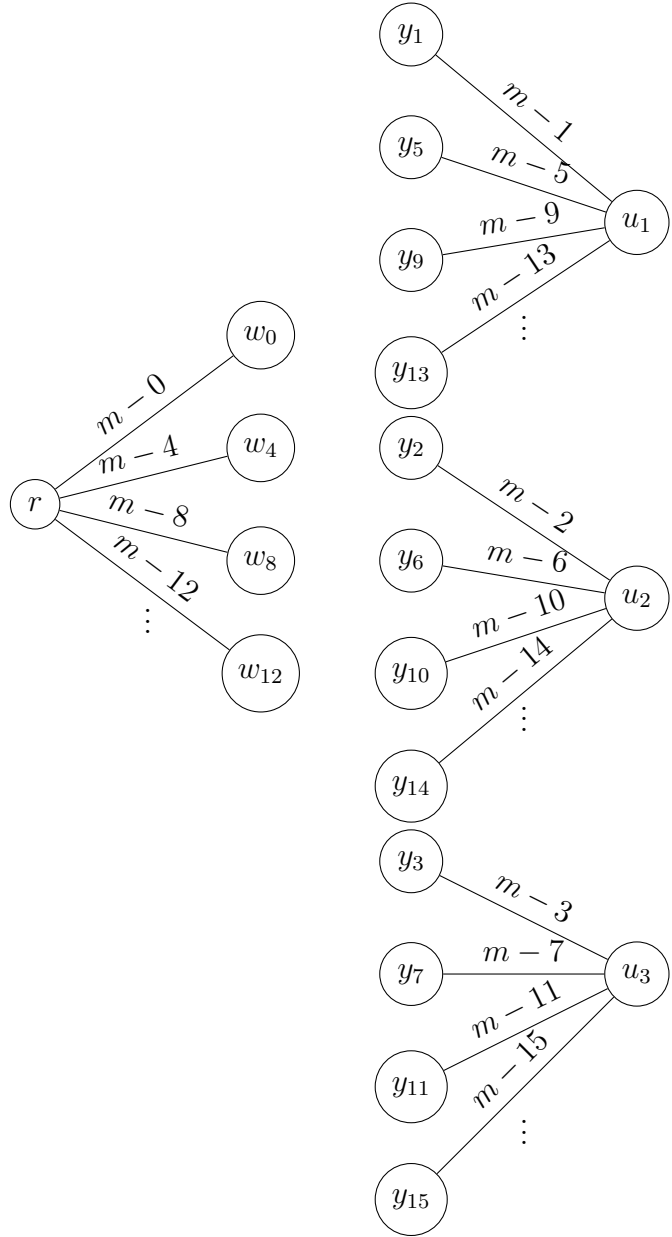
\begin{figure}[h!]
        \centering
        \begin{tikzpicture}[every circle node/.style = {draw}, every edge/.style={draw,sloped}]
            \node[circle] at (-1,0.75) (r) {$r$};
            \node[circle] at (2,3) (w0) {$w_0$};
            \node[circle] at (2,1.5) (w4) {$w_4$};
            \node[circle] at (2,0) (w8) {$w_8$};
            \node[circle] at (2,-1.5) (w12) {$w_{12}$};
            \node at (0.5,-0.7) {$\vdots$};

            \node[circle] at (4,7) (y1) {$y_1$};
            \node[circle] at (4,5.5) (y5) {$y_5$};
            \node[circle] at (4,4) (y9) {$y_9$};
            \node[circle] at (4,2.5) (y13) {$y_{13}$};
            \node[circle] at (4,1.5) (y2) {$y_2$};
            \node[circle] at (4,0) (y6) {$y_6$};
            \node[circle] at (4,-1.5) (y10) {$y_{10}$};
            \node[circle] at (4,-3) (y14) {$y_{14}$};
            \node[circle] at (4,-4) (y3) {$y_3$};
            \node[circle] at (4,-5.5) (y7) {$y_7$};
            \node[circle] at (4,-7) (y11) {$y_{11}$};
            \node[circle] at (4,-8.5) (y15) {$y_{15}$};

            \node at (5.5,-7.5) {$\vdots$};
            \node at (5.5,-2) {$\vdots$};
            \node at (5.5,3.2) {$\vdots$};

            \node[circle] at (7,4.5) (u1) {$u_1$};
            \node[circle] at (7,-0.5) (u2) {$u_2$};
            \node[circle] at (7,-5.5) (u3) {$u_3$};

            \foreach \x in {0,4,8,12}
                \draw (r) edge["$m - \x$"] (w\x);
            \foreach \x in {1,5,9,13}
                \draw (u1) edge["$m - \x$"] (y\x);
            \foreach \x in {2,6,10,14}
                \draw (u2) edge["$m - \x$"] (y\x);
            \foreach \x in {3,7,11,15}
                \draw (u3) edge["$m - \x$"] (y\x);
        \end{tikzpicture}
        \caption{Illustration of the labelling of $G$.}
        \label{etiquetageinitial}
    \end{figure}

These notations (meaning the use of $y,w$ or $v$ to denote the vertices of $H$) allow us in the following to give a simple way to identify which labels are being exchanged, and to distinguish between seeing vertices of $H$ as 'neighbours of $u_i$' or as 'neighbours of $r$'.

With our notations, two vertices $y_i$ and $y_{i'}$ that are neighbours of the same vertex $u_j$ are necessarily distinct. Similarly, two vertices $w_i$ and $w_{i'}$ are also distinct. Moreover, with our construction, the three vertices $y_i$ that correspond to the same interval $I_j = \{m-4(j-1)-1,m-4(j-1)-2,m-4(j-1)-3\}$ are all distinct (since we labelled three edges in the same colour class with those three labels). For instance, $y_1,y_2$ and $y_3$ are three distinct vertices.

We formally define the notion of conflict between two vertices:

\begin{definition}
    Two vertices $u$ and $u'$ are said to be in \emph{conflict}, written \cf{u}{u'}, if $\sigma(u) = \sigma(u')$.
\end{definition}

A labelling is then antimagic if and only if there are no conflicts in the graph. Note that, with our labelling process, the only conflicts that could arise in the graph are, thanks to Properties \ref{ecartu}, \ref{ecartracine} and \ref{ecartv}, between $u_1,u_2$ or $u_3$ on the one hand, and some vertex $v \in H$ on the other hand.

\section{Conflicts resolution}

In this section, we explain how to solve these potential conflicts after the initial labelling is done, in order to obtain a conflict-free labelling, meaning an antimagic labelling.

We summarize the possible exchanges of labels and their consequences on the different values of $\sigma$ in Table \ref{tab1}. Notice that, for any exchange $\lambda_i$ or $\gamma_i$, the involved vertices $y_i$ and $y_{i+1}$ are necessarily distinct. In all the following, $v_1$ (respectively $v_2$, $v_3$) will denote the vertex of $H$ with the value of $\sigma$ closest to the one of $u_1$ (respectively of $u_2,u_3$) - in case of a tie, we pick one vertex arbitrarily.

\begin{table}[h!]
        \centering
        \begin{tabular}{|c|c|c|c|}
            \hline
             Notation & Exchanges & Evolution of $\sigma(u_i)$ & Evolution of $\sigma(v_i)$ \\
             \hline
             $\lambda_{1}$ & $m-1 \leftrightarrow m-2$ & \multirow{4}{*}{$\sigma(u_1) - 1, \sigma(u_2) + 1$} & $\sigma(y_1) - 1, \sigma(y_2) + 1$\\
             $\lambda_{5}$ & $m-5 \leftrightarrow m-6$ & & $\sigma(y_5) - 1, \sigma(y_6) + 1$ \\
             $\lambda_{9}$ & $m-9\leftrightarrow m-10$ & & $\sigma(y_9) - 1, \sigma(y_{10}) + 1$ \\
             $ \lambda_{13}$ & $m - 13 \leftrightarrow m - 14$ & & $\sigma(y_{13}) - 1$, $\sigma(y_{14}) + 1$ \\
             \hline
             $\gamma_{2}$ & $m-2 \leftrightarrow m-3$ & \multirow{4}{*}{$\sigma(u_2) - 1, \sigma(u_3) + 1$} & $\sigma(y_2) - 1, \sigma(y_3) + 1$ \\
             $\gamma_{6}$ & $m - 6 \leftrightarrow m - 7$ & & $\sigma(y_6) - 1, \sigma(y_7) + 1$ \\
             $\gamma_{10}$ & $m - 10 \leftrightarrow m - 11$ & & $\sigma(y_{10}) - 1, \sigma(y_{11}) + 1$ \\
             $\gamma_{14}$ & $m - 14 \leftrightarrow m - 15$ & & $\sigma(y_{14}) - 1, \sigma(y_{15}) + 1$ \\
             \hline
             $\mu_{0}$ & $m \leftrightarrow m-1$ & \multirow{4}{*}{$\sigma(u_1) + 1$} & $\sigma(y_1) + 1, \sigma(w_0) - 1$ \\
             $ \mu_{4}$ & $m-4 \leftrightarrow m-5$ & & $\sigma(y_5) + 1, \sigma(w_4) - 1$ \\
             $ \mu_{8}$ & $m-8\leftrightarrow m-9$ & & $\sigma(y_9) + 1, \sigma(w_8) - 1$ \\
             $ \mu_{12}$ & $m-12 \leftrightarrow m - 13$ & & $\sigma(y_{13}) + 1, \sigma(w_{12}) - 1$  \\
             \hline
             $\rho_{3}$ & $m-3 \leftrightarrow m-4$ & \multirow{4}{*}{$\sigma(u_3) - 1$} & $\sigma(y_3) - 1, \sigma(w_4) + 1$ \\
             $\rho_{7}$ & $m-7 \leftrightarrow m-8$ & & $\sigma(y_7) - 1, \sigma(w_8) + 1$ \\
             $\rho_{11}$ & $m-11 \leftrightarrow m-12$ & & $\sigma(y_{11}) - 1, \sigma(w_{12}) + 1$ \\
             $ \rho_{15}$ & $m - 15 \leftrightarrow m - 16$ & & $\sigma(y_{15}) - 1, \sigma(w_{16}) + 1$ \\
             \hline
        \end{tabular}
        \caption{Summary of the possible exchanges (the modifications of the value of $\sigma(r)$ are ignored).}
        \label{tab1}
\end{table}

We explain how to obtain an antimagic labelling in every case:

\begin{itemize}
    \item \textbf{Case 1: \cf{u_1}{v_1} and \cf{u_2}{v_2} and \cf{u_3}{v_3}.}

    The $\lambda_i$ exchange solves the conflicts over $u_1$ and $u_2$, meaning that afterwards, the values of $\sigma(u_1)$ and $\sigma(u_2)$ are different from all the other values of $\sigma(x), x \in V$, unless $v_1 = y_i$ or $v_2 = y_{i+1}$, thanks to Property \ref{ecartv}. There is at most one $\lambda_i$ exchange such that $y_i = v_1$ and at most one $\lambda_i$ exchange such that $y_{i+1} = v_2$ (since all $y_i$'s are distinct because they are all neighbours of $u_1$, and all $y_{i+1}$'s are distinct because they are all neighbours of $u_2$). Thus, there are necessarily at least two of the four $\lambda_i$ exchanges such that $y_i \neq v_1$ and $y_{i+1} \neq v_2$, guaranteeing that $u_1$ and $u_2$ are conflict-free after the exchange. Let us call them $\lambda_i$ and $\lambda_{i'}$, and consider first that $\lambda_i$ has been applied.

    After the exchange, the value of $\sigma(u_1)$ has been decreased by $1$, and the value of $\sigma(u_2)$ has been increased by $1$. Note that, thanks to Property \ref{ecartu}, those two values cannot be equal to each other.

    Notice that, if the value of $\sigma(v_3)$ was modified during the exchange $\lambda_i$ (i.e. $v_3 = y_i$ or $v_3 = y_{i+1}$), then $u_3$ is not in conflict with $v_3$ anymore, and hence the labelling is conflict-free and antimagic.

    We will now perform a second exchange, $\rho_k$, between labels over an edge incident to $u_3$ and another edge incident to $r$, in order to solve the conflict over $u_3$. We will need to make sure we are not creating a new conflict involving $u_1$ or $u_2$ in the meanwhile.

    The $\rho_k$ exchange yields an antimagic labelling, unless:

    \begin{itemize}
        \item There is a conflict between two different $u_i$; however, thanks to Property \ref{ecartu}, this is impossible.

        \item There is still a conflict between a vertex $v \in H$ and $u_1$. After the two exchanges, the value of $\sigma(u_1)$ has been decreased by 1, and the variation of the value of any $\sigma(v)$ is between -2 and +2. Due to Property \ref{ecartv}, $\sigma(v) = \sigma(u_1)$ implies that $v = v_1$, and that the value of $\sigma(v_1)$ has been decreased by 1 after the two exchanges. Hence, the only two possibilities are $v = v_1 = y_i$ and $v = v_1 = y_k$. Since $\lambda_i$ was chosen such that $v_1 \neq y_i$, this implies $v_1 = y_k$.

        \item There is still a conflict between a vertex $v \in H$ and $u_2$. After the two exchanges, the value of $\sigma(u_2)$ has been increased by 1, and the variation of the value of any $\sigma(v)$ is between -2 and +2. Due to Property \ref{ecartv}, $\sigma(v) = \sigma(u_2)$ implies that $v = v_2$, and that the value of $\sigma(v_2)$ has been increased by 1 after the two exchanges. Hence, the only two possibilities are $v = v_2 = y_{i+1}$ and $v = v_2 = w_{k+1}$. Since $\lambda_i$ was chosen such that $v_2 \neq y_{i+1}$, this implies $v_2 = w_{k+1}$.

        \item There is still a conflict between a vertex $v \in H$ and $u_3$. After the two exchanges, the value of $\sigma(u_3)$ has been decreased by $1$, and the variation of the value of any $\sigma(v)$ is between -2 and +2. Due to Property \ref{ecartv}, $\sigma(v) = \sigma(u_3)$ implies that $v = v_3$, and that the value of $\sigma(v_3)$ has been decreased by 1 after the two exchanges. Hence, the only two possibilities are $v = v_3 = y_i$ and $v = v_3 = y_k$. However, $v_3 \neq y_i$; otherwise, as previously explained, the graph was already proven antimagic after the $\lambda_i$ exchange. This means that $v_3 = y_k$.

        \item There is a conflict between two vertices of $H$. Since we are exactly making two increases of +1 and two decreases of -1 over the values of $\sigma$, and due to Property \ref{ecartv}, the only case where a conflict could arise is where the three following equalities are verified: $y_i = y_k$ (meaning the value of $\sigma(y_i)$ was decreased by $2$), $y_{i+1} = w_{k+1}$ (meaning the value of $\sigma(y_{i+1})$ was increased by $2$), and $\sigma(y_i) = \sigma(y_{i+1}) + 4$.
    \end{itemize}

    This means that, if no $\rho_k$ exchange allows us to obtain an antimagic labelling, this means that there exist four distinct indices $k_1,k_2,k_3,k_4 \in \{3,7,11,15\}$, such that the following equalities are all true:

    \begin{itemize}
        \item $v_1 = y_{k_1}$,
        \item $v_2 = w_{k_2 + 1}$,
        \item $v_3 = y_{k_3}$,
        \item $y_i = y_{k_4}$ and $y_{i+1} = w_{k_4+1}$ and $\sigma(y_i) = \sigma(y_{i+1}) + 4$.
    \end{itemize}

    Indeed, if some $k_h \in \{3,7,11,15\}$ is such that all the above equalities are false, then the $\rho_{k_h}$ exchange yields a conflict-free labelling, i.e. an antimagic one. Moreover, notice that each of these equalities can be verified by at most one pair $(y_{k_h}, w_{k_h + 1})$, with $k_h\in \{3,7,11,15\}$, since two vertices $w_p$ and $w_q$ are distinct, as are two vertices $y_p$ and $y_q$ that are neighbours of the same $u_s$.

    If this is the case, we can then perform the exchange $\lambda_{i'}$ instead of $\lambda_i$. We obtain that $\rho_{k_4}$ is such that:

    \begin{itemize}
        \item $v_1 = y_{k_1} \neq y_{k_4}$,
        \item $v_2 = w_{k_2 + 1} \neq w_{k_4 + 1}$,
        \item $v_3 = y_{k_3} \neq y_{k_4}$,
        \item $y_{i'} \neq y_i = y_{k_4}$ and $y_{i'+1}\neq y_{i+1} = w_{k_4+1}$.
    \end{itemize}

    We can then obtain an antimagic labelling by performing $\rho_{k_4}$.
    
    \item \textbf{Case 2: \cf{u_1}{v_1} and \cf{u_2}{v_2}.}

    We will perform a $\lambda_i$ exchange. Recall that the $u_i$ cannot be in conflict with each other once the exchange is applied, due to Property \ref{ecartu}. Moreover, two vertices of $H$ cannot be in conflict with each other as well due to Property \ref{ecartv}.

    The $\lambda_i$ exchange then yields an antimagic labelling unless $y_i = v_1, y_{i+1} = v_2, \sigma(u_3) = \sigma(y_i) - 1$ or $\sigma(u_3) = \sigma(y_{i+1}) + 1$.

    However, those last two possibilities are mutually exclusive due to Property \ref{ecartv}, meaning there is at most one $\lambda_i$ exchange such that $\sigma(u_3) = \sigma(y_i) - 1$ or $\sigma(u_3) = \sigma(y_{i+1}) + 1$. We then have four possible $\lambda_i$ exchanges and three possible issues, and therefore there always exists an exchange yielding an antimagic labelling.

    \item  \textbf{Case 3: \cf{u_2}{v_2} and \cf{u_3}{v_3}.}

    The proof in this case is analogous to the previous case, with the exchanges $\gamma$  being considered instead of $\lambda$.

    \item \textbf{Case 4: \cf{u_1}{v_1} and \cf{u_3}{v_3}.}

    Contrary to Case 1, we cannot simply exchange labels between edges incident to $u_1$ and $u_3$, as this would induce changes of +2 and -2 on the values of $\sigma(v), v \in H$, and conflicts between vertices of $H$ could then arise.

    Let us suppose first that $|\sigma(v_2) - \sigma(u_2)| \geq 2$. Since there is at most one $\lambda_i$ exchange such that $y_i = v_1$, and at most one $\lambda_i$ exchange such that $y_i = v_2$, there are at least two possible exchanges $\lambda_i$ and $\lambda_{i'}$ such that $v_1 \neq y_i,y_{i'}$ and $v_2 \neq y_i,y_{i'}$, meaning that both $u_1$ and $u_2$ are conflict-free once any of these two exchanges is applied. Let us perform $\lambda_i$ for now; the value of $\sigma(u_2)$ is increased by 1. It is then possible that $v_2$ is such that $\sigma(u_2) + 1 = \sigma(v_2) - 1$. Similarly as in Case 1, we assume $v_3 \neq y_i$ (otherwise $u_3$ and $v_3$ are not in conflict anymore after the $\lambda_i$ exchange), and the $\rho_k$ exchange then yields an antimagic labelling, unless:

    \begin{itemize}
        \item $v_1 = y_k$,
        \item $v_2 = y_k$ (in the case $\sigma(u_2) + 1 = \sigma(v_2) - 1$),
        \item $v_3 = y_k$,
        \item $y_i = y_k$ and $w_{k+1} = y_{i+1}$ and $\sigma(y_i) = \sigma(y_{i+1}) + 4$.
    \end{itemize}

    If no $\rho_k$ yields an antimagic labelling, we can, as in Case 1, consider the $\lambda_{i'}$ exchange instead of $\lambda_i$. We can then guarantee that $y_k \neq y_{i'}$ and $w_{k+1} \neq y_{i'+1}$, and obtain an antimagic labelling.

    Let us now suppose that $|\sigma(v_2) - \sigma(u_2)| = 1$. Suppose first that $\sigma(u_2) = \sigma(v_2) + 1$. If there exists $j$ such that:

    \begin{itemize}
        \item $v_1 = w_{j+1}$,
        \item $y_ju_3 \in E$ and $y_j \neq v_3$,
        \item $w_{j+1} \neq y_j$.
    \end{itemize}

    Then we perform the $\rho_j$ exchange. The modified values of $\sigma$ are the following: $\sigma(u_3) - 1, \sigma(y_j) - 1, \sigma(w_{j+1}) + 1$.

    Since $y_j \neq v_3$, due to Property \ref{ecartv}, $u_3$ is conflict-free. Since $v_1 = w_{j+1}$ and $w_{j+1} \neq y_j$, $u_1$ is also conflict-free. Finally, since $v_1 = w_{j+1}$, $v_2 \neq w_{j+1}$ due to Property \ref{ecartu}. This means that $u_2$ is conflict-free as well, and hence the labelling is antimagic.

    If such a $j$ does not exist, let us consider the $\mu_i$ exchanges. For each $i$, the $\mu_i$ exchange (strictly) decreases the number of conflicts, unless $v_1 = y_{i+1}$ or $v_2 = y_{i+1}$. Therefore there exist at least two $\mu$ exchanges such that $u_1$ and $u_2$ are conflict-free once the exchange is applied. Let $\mu_i$ be such an exchange.

    If $u_3$ is still in conflict once $\mu_i$ is performed (meaning that the value of $\sigma(v_3)$ was unchanged by $\mu_i$), we now consider the $\rho$ exchanges.

    Let us consider $\rho_j$. This yields an antimagic labelling, unless $v_3 = y_j$ or $v_2 = w_{j+1}$ or ($y_{i+1} = w_{j+1}$ and $w_i = y_j$ and $\sigma(w_i) = \sigma(w_j) + 4$) or ($w_{j+1} = v_1 \neq y_{i+1}$ and $w_{j+1} \neq y_j$) (meaning that, in this last case, after the two exchanges are performed, the value of $\sigma(v_1)$ was increased by $1$).

    However, this last case is impossible since we assumed such a $j$ did not exist. Therefore there are at most three issues and four possible exchanges (each issue can be associated with at most one $\rho_j$ exchange), hence there is at least one $\rho$ exchange yielding an antimagic labelling.

    The reasoning is easily adaptable to the case where $\sigma(u_2) = \sigma(v_2) - 1$; the equalities to exclude in this case will be $v_2 = w_i$ instead of $v_2 = y_{i+1}$ and then $v_2 = y_j$ instead of $v_2 = w_{j+1}$.

    \item \textbf{Case 5: \cf{u_1}{v_1}.}

    The $\mu_k$ exchange yields an antimagic labelling, thanks to Property \ref{ecartu}, unless:

    \begin{itemize}
        \item $v_1 = y_{k+1}$,
        \item $v_2 = y_{k+1}$ (and $\sigma(v_2) + 1 = \sigma(u_2)$) or $v_2 = w_k$ (and $\sigma(v_2) - 1 = \sigma(u_2)$); these two cases are mutually exclusive due to Property \ref{ecartv},
        \item $v_3 = y_{k+1}$ (and $\sigma(v_3) + 1 = \sigma(u_3)$) or $v_3 = w_k$ (and $\sigma(v_3) - 1 = \sigma(u_3)$); these two cases are mutually exclusive due to Property \ref{ecartv}.
    \end{itemize}

    There are three possible issues and four possible $\mu_k$ exchanges (each issue can be associated with at most one $\mu_k$ exchange), hence there always exists an exchange yielding an antimagic labelling.

    \item \textbf{Case 6: \cf{u_3}{v_3}}

    The reasoning is analogous to the previous case; a $\rho_k$ exchange yields an antimagic labelling, unless:

    \begin{itemize}
        \item $v_1 = y_k$ or $v_1 = w_{k+1}$, but these two cases are mutually exclusive;
        \item $v_2 = y_k$ or $v_2 = w_{k+1}$, but these two cases are mutually exclusive;
        \item $v_3 = y_k$.
    \end{itemize}

    Again, there are three possible issues and four $\rho_k$ exchanges, therefore there always exists an exchange yielding an antimagic labelling.

    \item \textbf{Case 7: \cf{u_2}{v_2}.}

    \begin{itemize}
        \item \textbf{Case 7.1:} $|\sigma(v_1) - \sigma(u_1)| \geq 2$

        The $\lambda_i$ exchange yields an antimagic labelling unless $v_2 = y_{i+1}$, $v_1 = y_{i+1}$ (and $\sigma(v_1) = \sigma(u_1) - 2$), $\sigma(u_3) = \sigma(y_i) - 1$ (and $v_3 = y_i$) or $\sigma(u_3) = \sigma(y_{i+1}) + 1$ (and $v_3 = y_{i+1}$). As in the previous case, those last two equalities are mutually exclusive, due to Property \ref{ecartv}; overall, there are three possible issues and four possible exchanges, hence there exists a $\lambda_i$ yielding an antimagic labelling.

        \item \textbf{Case 7.2:} $\sigma(v_1) = \sigma(u_1) + 1$.

        We can apply the same reasoning as in Case 7.1, because the $\lambda$ exchanges all decrease the value of $\sigma(u_1)$ by $1$ (simply note that, due to Property \ref{ecartv}, there is one less issue since $\sigma(v_1) \neq \sigma(u_1) - 2$).

        \item \textbf{Case 7.3:} $|\sigma(v_3) - \sigma(u_3)| \geq 2$.

        The reasoning is the same as in Case 7.1, by using the $\gamma$ exchanges instead of $\lambda$. The potential issues arise when $v_2 = y_i$, $v_3 = y_i$ (and $\sigma(v_3) = \sigma(u_3) + 2$), $\sigma(u_1) = \sigma(y_i) - 1$ (and $y_i = v_1$), or $\sigma(u_1) = \sigma(y_{i+1}) + 1$ (and $y_{i+1} = v_1$). Again, there are three potential issues and four possible exchanges, therefore there exists a $\gamma_i$ yielding an antimagic labelling.

        \item \textbf{Case 7.4:} $\sigma(v_3) = \sigma(u_3) - 1$.

        The same reasoning applies again (with only two possible issues and four possible exchanges), as the $\gamma$ exchanges all increase the value of $\sigma(u_3)$ by $1$.

        \item \textbf{Case 7.5:} $\sigma(v_1) = \sigma(u_1) - 1$ and $\sigma(v_3) = \sigma(u_3) + 1$.

        If there exists $\mu_j$ such that $y_{j+1} = v_3$ (hence $y_{j+1} \neq v_1$ due to Property \ref{ecartu}), we perform this exchange: this increases $\sigma(u_1)$ by 1, and $\sigma(u_1) + 1 = \sigma(v_1) + 2$. This also increases $\sigma(v_3)$ by 1, and $\sigma(v_3) + 1 = \sigma(u_3) + 2$. Therefore, once the $\mu_j$ exchange is performed, there is a difference of at least 2 between the values of $\sigma(u_1)$ (respectively $\sigma(u_3)$) and $\sigma(v_1)$ (respectively $\sigma(v_3)$). If the value of $\sigma(v_2)$ is modified with this exchange, there are no more conflicts in the graph, due to Property \ref{ecartv}; hence we will assume it is unchanged.

        Let us then consider the $\lambda$ exchanges, excluding $\lambda_{j+1}$; the $\lambda_i$ exchange yields the following changes (after the two exchanges): $\sigma(u_2) + 1, \sigma(v_3) + 1, \sigma(w_j) - 1, \sigma(y_i) - 1, \sigma(y_{i+1}) + 1$. We then obtain an antimagic labelling, unless $v_1 = y_{i+1}$ or $v_2 = y_{i+1}$ (since we excluded $\lambda_{j+1}$, $y_i \neq y_{j+1} = v_3$). There are two possible issues and three possible exchanges (each issue can be associated with at most one $\lambda_i$ exchange), so we can obtain an antimagic labelling.

        Let us now consider that there are no $\mu_j$ such that $y_{j+1} = v_3$. Since there are four possible $\mu_j$, there exist at least two such that:

        \begin{itemize}
            \item $w_j \neq v_3$, meaning that $u_3$ is conflict-free after the exchange,

            \item $y_{j+1} \neq v_1$, meaning that the difference between the values of $\sigma(u_1)$ and $\sigma(v_1)$ is at least 2 once $\mu_j$ has been performed, and $u_1$ is conflict-free after the exchange.
        \end{itemize}

        As in the previous case, if the value of $\sigma(v_2)$ is modified with the $\mu_j$ exchange, then the labelling is conflict-free thus antimagic; we will then assume that $\sigma(v_2)$ is unchanged.

        Let us now consider the $\lambda$ exchanges, excluding $\lambda_{j+1}$. The $\lambda_i$ exchange yields an antimagic labelling, unless:

        \begin{itemize}
            \item $v_2 = y_{i+1}$ (since $\sigma(v_2)$ is unchanged after the $\mu_j$ exchange),
            \item $v_1 = y_{i+1}$ (since $v_1 \neq y_{j+1}$ due to the choice of $\mu_j$),
            \item $v_3 = y_i$ (since $v_3 \neq w_j$ due to the choice of $\mu_j$).
        \end{itemize}

        However, this last equality is impossible because we could have otherwise performed the $\mu_{i-1}$ exchange, in which we would have had $v_3 = y_i$, but we previously assumed no such exchange existed. Therefore, we have 2 possible issues and 3 possible exchanges (each issue can be associated with at most one $\lambda_i$ exchange), it is then possible to obtain an antimagic labelling.
    \end{itemize}

\end{itemize}

In every case, the value of $\sigma(r)$ is decreased at most by 1. Meanwhile, a given vertex $v \neq r$ has its value $\sigma(v)$ increased at most by 2. Due to Property \ref{ecartracine}, we obtain in every case that $\sigma(r) > \sigma(v)$ once the exchanges are applied, for any $v \neq r$, thus confirming the fact that we could ignore the changes made to the value of $\sigma(r)$.

\section{Labelling process for the other cases}

Let us now consider that the hypothesis made on the structure of the graph at the beginning of Section $2$ do not hold. First, if $\{u_1,u_2,u_3\}$ does not induce an independent set, we simply label the edges between two $u_i$ with the smallest labels - \emph{i.e.} $1,2$ and $3$ if necessary -, sorting the edges by inverted lexicographic order. Hence, for instance, in the case where $\{u_1,u_2,u_3\}$ induces a clique, $u_2u_3$ is labelled by $1$, $u_1u_3$ by $2$ and $u_1u_2$ by $3$. We then proceed to the labelling as described in Section 2, and it is easy to verify that Properties \ref{ecartu}, \ref{ecartracine} and \ref{ecartv} are not modified, and neither is Section 3, proving that we can obtain an antimagic labelling.

Second, assume now that $d'(u_3) \leq 3$. Recall that $d'(u_1) \geq d'(u_2) \geq d'(u_3)$. Let $i = \min \{j \in \{1,2,3\} | d'(u_j) \leq 3\}$.

We will label edges incident to $\{u_j, j\geq i\}$ with the smallest labels (excluding the ones used to label edges between two $u_k$'s), in order to guarantee that, for any $v \in H$, $\sigma(u_3) < \ldots < \sigma(u_i) < \sigma(v)$.

We distinguish three cases depending on the value of $i$:

\begin{itemize}
    \item $i = 1$.

    We label the graph by reserving the $n - 4$ largest labels for the edges incident to $r$, and using the smallest labels for edges incident to $u_1,u_2$ and $u_3$, dealing with the $u_k$ in decreasing order (meaning we label first all the edges incident to $u_3$, then the edges incident to $u_2$, and finally the edges incident to $u_1$). We give an illustration of the labelling of the graph in Figure \ref{illus2}. Recall that, before labelling the edges incident to $r$, we sort the vertices of $H$ by increasing order of $\sigma'$.

    \begin{figure}[h!]
        \centering
        \begin{tikzpicture}[every circle node/.style = {draw}, every edge/.style={draw,sloped}]
            \node[circle] at (-1,0.75) (r) {$r$};
            \node[circle] at (2,3) (w0) {$w_0$};
            \node[circle] at (2,1.5) (w1) {$w_1$};
            \node[circle] at (2,0) (w2) {$w_2$};
            \node[circle] at (2,-1.5) (w3) {$w_{3}$};
            \node at (0.5,-0.7) {$\vdots$};

            \node[circle] at (4,7) (y1) {$y_1$};
            \node[circle] at (4,5.5) (y2) {$y_2$};
            \node[circle] at (4,4) (y3) {$y_3$};
            \node[circle] at (4,1.5) (y4) {$y_4$};
            \node[circle] at (4,0) (y5) {$y_5$};
            \node[circle] at (4,-1.5) (y6) {$y_{6}$};
            \node[circle] at (4,-4) (y7) {$y_7$};
            \node[circle] at (4,-5.5) (y8) {$y_8$};

            \node[circle] at (7,4.5) (u1) {$u_1$};
            \node[circle] at (7,-0.5) (u2) {$u_2$};
            \node[circle] at (7,-5.5) (u3) {$u_3$};

            \foreach \x in {0,1,2,3}
                \draw (r) edge["$m - \x$"] (w\x);
            \draw (u1) edge["$3$"] (u2);
            \draw (u2) edge["$1$"] (u3);
            \draw (u3) edge["$4$"] (y8);
            \draw (u2) edge["$6$"] (y6);
            \draw (u3) edge["$5$"] (y7);
            \draw (u2) edge["$7$"] (y5);
            \draw (u2) edge["$8$"] (y4);
            \draw (u1) edge["$2$",bend left] (u3);
            \draw (u1) edge["$9$"] (y3);
            \draw (u1) edge["$10$"] (y2);
            \draw (u1) edge["$11$"] (y1);
        \end{tikzpicture}
        \caption{Labelling in the case $i = 1$.}
        \label{illus2}
    \end{figure}
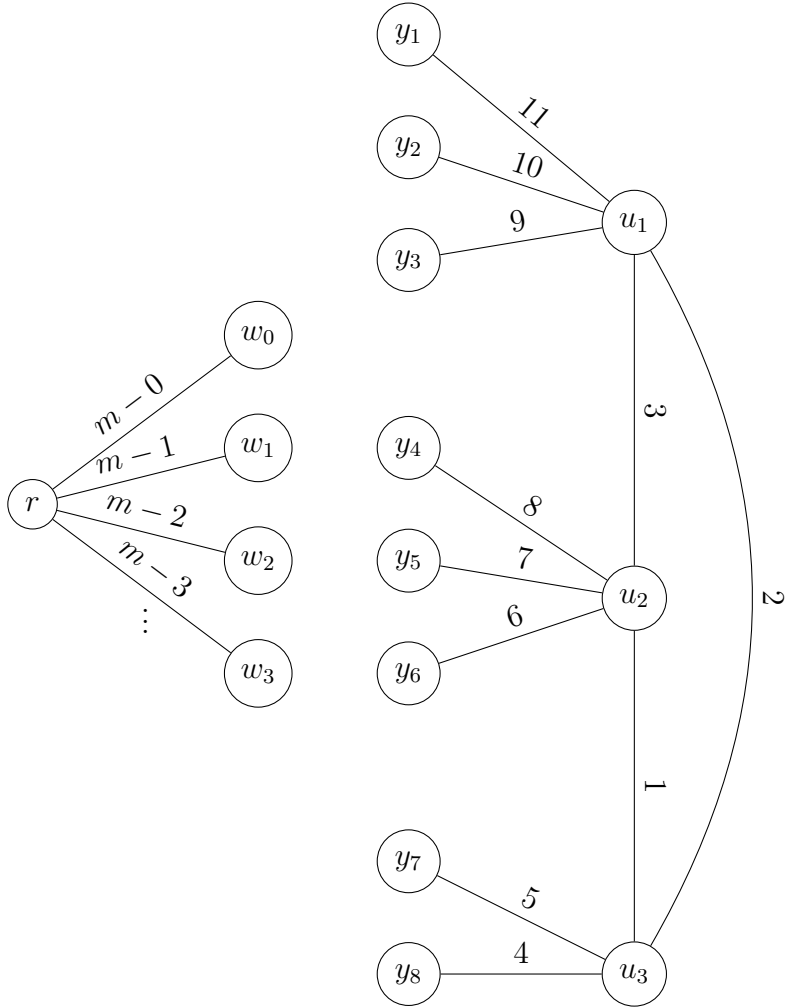

    It is easy (but time and place-consuming, hence the full details of the cases are omitted here) to verify that, whatever structure the graph induced by $\{u_1,u_2,u_3\}$ has, and whatever the values of $d'(u_1), d'(u_2), d'(u_3)$ are, we always obtain $\sigma(u_3) < \sigma(u_2) < \sigma(u_1)$.

    Furthermore, $\sigma(u_1) \leq (2+3) + 10 + 11 +12$. The $(2+3)$ term is an upper bound of the labels of $u_1u_2$ and $u_1u_3$ (if the edges exist), and $12$ is the largest possible label that can be assigned to an edge incident to $u_1$. Overall we obtain $\sigma(u_1) \leq 38$.

    Since the edges incident to $r$ are labelled with the $n-4$ largest labels, we have, for any $v \in H$: $\sigma(r) > \sigma(v) \geq m - (n-5) \geq 6n + 5$ (because $m \geq 7n$), and hence $\sigma(v) \geq 101$ because $n \geq 16$. This means $\sigma(v) > \sigma(u_1)$, and since the vertices of $H$ are sorted by increasing order of $\sigma'$ before labelling the edges incident to $r$, it is impossible to obtain a conflict between two vertices of $H$. Overall, the labelling obtained is antimagic.

    \item $i = 2$.

    We label the graph as illustrated in Figure \ref{illus3}: once again the edges incident to $u_3$ and $u_2$ are labelled with the smallest labels (first the edges incident to $u_3$, then the edges incident to $u_2$). We reserve the labels $\{m-1,m-3,\ldots, m-2(n-5)+1, m-2(n-5) - 1\}$ for the edges incident to $r$, and the labels $\{m,m-2,m-4, \ldots, m-2(d'(u_1)-2), m-2(n-5)-2\}$ for the edges incident to $u_1$. As in all the previous labellings, we sort the vertices in $H$ by increasing order of $\sigma'$ before labelling the edges incident to $r$.

    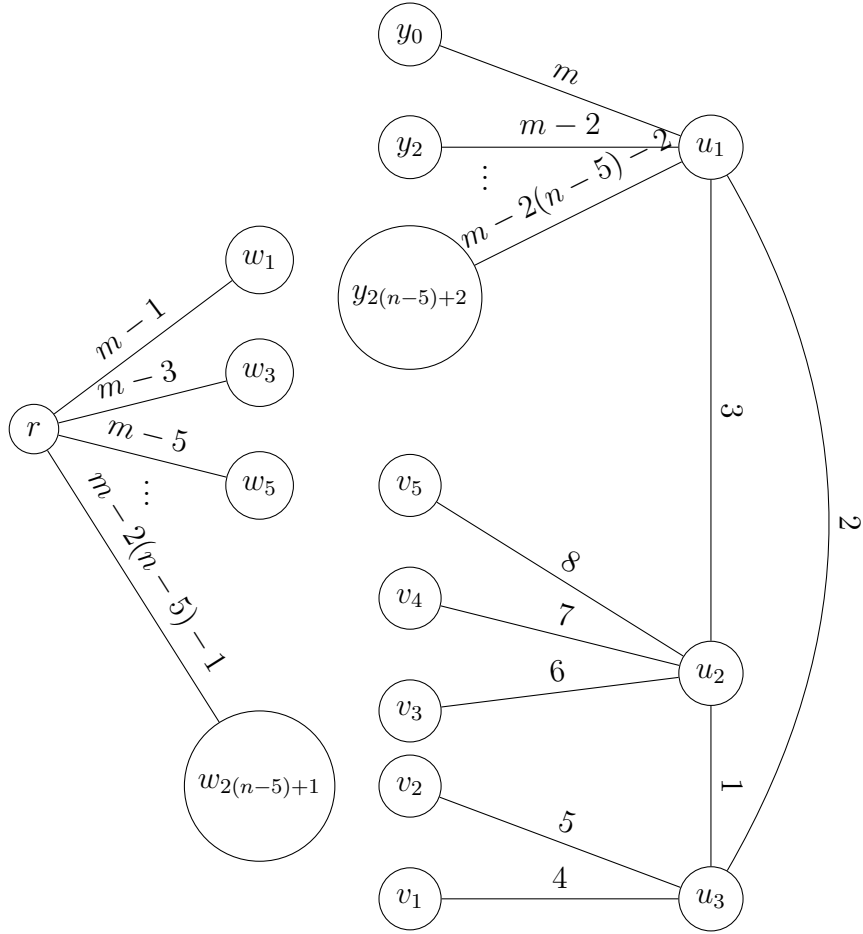
\begin{figure}[h!]
        \centering
        \begin{tikzpicture}[every circle node/.style = {draw}, every edge/.style={draw,sloped}]
            \node[circle] at (-1,0.75) (r) {$r$};
            \node[circle] at (2,3) (w1) {$w_1$};
            \node[circle] at (2,1.5) (w3) {$w_3$};
            \node[circle] at (2,0) (w5) {$w_5$};
            \node at (0.5,0) {$\vdots$};
            \node[circle] at (2,-4) (wn) {$w_{2(n-5) + 1}$};

            \node[circle] at (4,2.5) (yalph) {$y_{2(n-5) + 2}$};
            \node[circle] at (4,0) (v5) {$v_5$};
            \node[circle] at (4,-1.5) (v4) {$v_4$};
            \node[circle] at (4,-3) (v3) {$v_{3}$};
            \node[circle] at (4,-4) (v2) {$v_2$};
            \node[circle] at (4,-5.5) (v1) {$v_1$};

            \node at (5,4.2) {$\vdots$};
            \node[circle] at (4,6) (y0) {$y_0$};
            \node[circle] at (4,4.5) (y2) {$y_2$};

            \node[circle] at (8,4.5) (u1) {$u_1$};
            \node[circle] at (8,-2.5) (u2) {$u_2$};
            \node[circle] at (8,-5.5) (u3) {$u_3$};

            \foreach \x in {1,3,5}
                \draw (r) edge["$m - \x$"] (w\x);
            \draw (u1) edge["$3$"] (u2);
            \draw (u2) edge["$1$"] (u3);
            \draw (u3) edge["$4$"] (v1);
            \draw (u2) edge["$6$"] (v3);
            \draw (u3) edge["$5$"] (v2);
            \draw (u2) edge["$7$"] (v4);
            \draw (u2) edge["$8$"] (v5);
            \draw (u1) edge["$2$",bend left] (u3);
            \draw (u1) edge["$m - 2(n-5) - 2$"] (yalph);
            \draw (r) edge["$m-2(n-5) - 1$"] (wn);
            \draw (u1) edge["$m$"] (y0);
            \draw (u1) edge["$m-2$"] (y2);
        \end{tikzpicture}
        \caption{Labelling in the case $i = 2$.}
        \label{illus3}
    \end{figure}

    Since the labels reserved for the edges incident to $r$ are spaced by 2, we guarantee that, for any two distinct vertices $v_1,v_2 \in H$, $|\sigma(v_1) - \sigma(v_2)| \geq 2$. Moreover, for any $v \in H$, we have $\sigma(v) \geq m - 2(n-5) - 1 \geq 5n + 9 \geq 89$ and $\sigma(u_3) < \sigma(u_2) < 30$. We also have $\sigma(u_1) \geq \sigma(u_2) + 4$.

    Let us now compare the values of $\sigma(r)$ and $\sigma(v)$, for any $v \in H$:

    \begin{align*}
        \sigma(v) \leq & ~ m + (m-1) + (m - 2(d'(u_1) - 1)) + (m - 2d'(u_1)) + \cdots + (m-2(n-5)) \\
        & + (m-2(n-5)-3) + (m-2(n-5)-4) + \cdots + (m-2(n-5)-\alpha)
    \end{align*}

    for some $\alpha \geq 3$. Since $d'(u_1) \geq 4$, we obtain that $\sigma(v) \leq m + (m-1) + (m-6) + (m-8) + \cdots + (m-2(n-5)) + (m-2(n-5)-3)$. Meanwhile, $\sigma(r) = (m-1) + (m-3) + \cdots + (m-2(n-5)-1)$. By comparing the two sums term-to-term, and since $d(r) = n - 4 \geq 12$, we obtain $\sigma(r) \geq \sigma(v) + 4$.

    The only possible conflict in the graph is then between $u_1$ and a vertex $v \in H \cup \{r\}$. In this case, if $v \neq y_0$, we perform the $m \leftrightarrow m - 1$ exchange. This decreases the values of $\sigma(u_1)$ and $\sigma(y_0)$ by $1$, and increases the values of $\sigma(r)$ and $\sigma(w_1)$ by 1. Since any two $v_1,v_2 \in H \cup \{r\}$ are such that $|\sigma(v_1) - \sigma(v_2)| \geq 2$, $u_1$ cannot be in conflict with another vertex $x \neq r,w_1, y_0$ (meaning a vertex $x$ with its value $\sigma(x)$ unchanged after the exchange). If $v = r$, it is impossible that $\sigma(w_1) + 1 = \sigma(u_1) - 1$ since $\sigma(r) \geq \sigma(w_1) + 4$. Otherwise, since $w_1$ is such that $\sigma(w_1) = \max\limits_{y \in H} \sigma(y)$, it is impossible that $\sigma(w_1) + 1 = \sigma(u_1) - 1$ since $u_1$ was in conflict with $v$. Moreover, $v \neq y_0$ and therefore $u_1$ is conflict-free after the exchange is applied. Finally, it is impossible that $\sigma(w_1) + 1 = \sigma(y_0) - 1$ because, again, $w_1$ is the vertex of $H$ with the largest value of $\sigma$. Overall, the labelling obtained is antimagic.

    If $v = y_0$, we perform the $m - 2(n-5) - 1 \leftrightarrow m - 2(n-5) - 2$ exchange instead. This increases $\sigma(u_1)$ and $\sigma(y_{2(n-5)+2})$ by 1, and decreases $\sigma(r)$ and $\sigma(w_{2(n-5) + 1})$ by 1. Since $v = y_0 \neq y_{2(n-5)+2}$, and since $w_{2(n-5) + 1}$ is the vertex of $H$ with the smallest value of $\sigma$, we similarly obtain an antimagic labelling.

    \item $i = 3$.

    We label the graph as illustrated in Figure \ref{illus4}: we use the smallest labels for the edges induced by the graph $\{u_1,u_2,u_3\}$. We then label the edges incident to $u_3$ with the smallest labels available, the edges incident to $u_2$ with $\{m-2,m-5,m-8,\ldots, m-3d'(u_2)-2\}$ and the edges incident to $u_1$ with $\{m-1,m-4,m-7,\ldots,m-3d'(u_1)-1\}$. The edges incident to $r$ are labelled with $\{m,m-3,\ldots,m-3(n-5)\}$. The edges of the graph induced by $V \setminus \{r,u_3\}$ (except the edge $u_1u_2$, if it exists) are labelled with the colouring process described in Section 2. The vertices of $H$ are once again sorted by increasing order of $\sigma'$ before labelling the edges incident to $r$.

    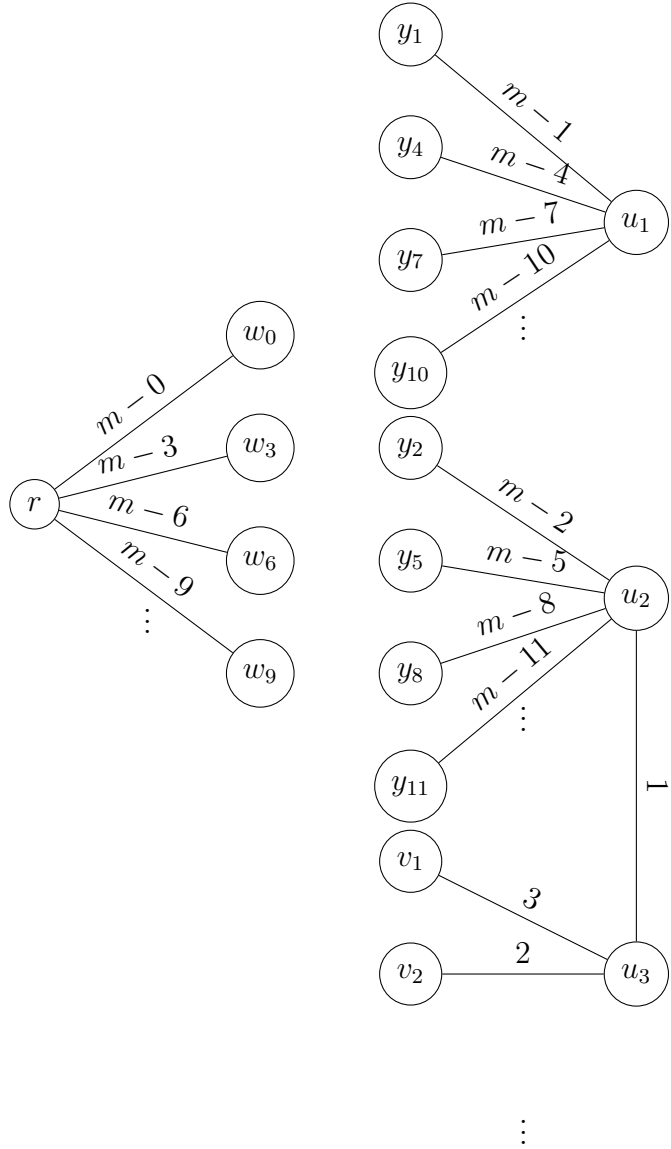
\begin{figure}[h!]
        \centering
        \begin{tikzpicture}[every circle node/.style = {draw}, every edge/.style={draw,sloped}]
            \node[circle] at (-1,0.75) (r) {$r$};
            \node[circle] at (2,3) (w0) {$w_0$};
            \node[circle] at (2,1.5) (w3) {$w_3$};
            \node[circle] at (2,0) (w6) {$w_6$};
            \node[circle] at (2,-1.5) (w9) {$w_{9}$};
            \node at (0.5,-0.7) {$\vdots$};

            \node[circle] at (4,7) (y1) {$y_1$};
            \node[circle] at (4,5.5) (y4) {$y_4$};
            \node[circle] at (4,4) (y7) {$y_7$};
            \node[circle] at (4,2.5) (y10) {$y_{10}$};
            \node[circle] at (4,1.5) (y2) {$y_2$};
            \node[circle] at (4,0) (y5) {$y_5$};
            \node[circle] at (4,-1.5) (y8) {$y_{8}$};
            \node[circle] at (4,-3) (y11) {$y_{11}$};
            \node[circle] at (4,-4) (v1) {$v_1$};
            \node[circle] at (4,-5.5) (v2) {$v_2$};

            \node at (5.5,-7.5) {$\vdots$};
            \node at (5.5,-2) {$\vdots$};
            \node at (5.5,3.2) {$\vdots$};

            \node[circle] at (7,4.5) (u1) {$u_1$};
            \node[circle] at (7,-0.5) (u2) {$u_2$};
            \node[circle] at (7,-5.5) (u3) {$u_3$};

            \foreach \x in {0,3,6,9}
                \draw (r) edge["$m - \x$"] (w\x);
            \foreach \x in {1,4,7,10}
                \draw (u1) edge["$m - \x$"] (y\x);
            \foreach \x in {2,5,8,11}
                \draw (u2) edge["$m - \x$"] (y\x);
            \draw (u3) edge["$3$"] (v1);
            \draw (u3) edge["$2$"] (v2);
            \draw (u2) edge["$1$"] (u3);
        \end{tikzpicture}
        \caption{Labelling in the case $i = 3$.}
        \label{illus4}
    \end{figure}

    Since the labels, for the edges incident to $r$, are spaced by $3$, we obtained, similarly to Property \ref{ecartv}, that for any two distinct vertices $v_1,v_2 \in H$, $|\sigma(v_1) - \sigma(v_2)| \geq 3$.

    We have: $\sigma(r) = m + (m-3) + \cdots + (m-3(n-5)) \geq \sigma(u_1) + 4$ and $\sigma(u_1) \geq \sigma(u_2) + 4$. We also have, for any $v \in H$, $\sigma(v) \leq m + (m-1) + (m-5) + (m-13) + (m-16) + \cdots + (m-3(n-5)-1)$ (because $d'(u_1) \geq d'(u_2) \geq 4$), and, since $n \geq 16$, $\sigma(r) \geq \sigma(v) + 4$. Finally, $\sigma(u_3) \leq 3 + 4 + 5 + 6 \leq 18$ hence $\sigma(u_3) \leq \sigma(u_2) - 4$ and, for any $v \in H, \sigma(u_3) \leq \sigma(v) - 4$.

    These properties imply that the only possible conflicts are between $u_1$ and some vertex $v_1 \in H$, and between $u_2$ and some vertex $v_2 \in H$. Moreover, this remains true after any of the exchanges that we will perform, since the value of $\sigma(x)$ for any $x \in V$ will  be modified by $0$, $1$ or $-1$. We show the possible exchanges of labels in Table \ref{tab2}, and we distinguish between three cases:

    \begin{table}[h!]
        \centering
        \begin{tabular}{|c|c|c|c|}
            \hline
             Notation & Exchanges & Evolution of $\sigma(u_i)$ & Evolution of $\sigma(v_i)$ \\
             \hline
             $\lambda_{1}$ & $m-1 \leftrightarrow m-2$ & \multirow{4}{*}{$\sigma(u_1) - 1, \sigma(u_2) + 1$} & $\sigma(y_1) - 1, \sigma(y_2) + 1$\\
             $\lambda_{4}$ & $m-4 \leftrightarrow m-5$ & & $\sigma(y_4) - 1, \sigma(y_5) + 1$ \\
             $\lambda_{7}$ & $m-7\leftrightarrow m-8$ & & $\sigma(y_7) - 1, \sigma(y_{8}) + 1$ \\
             $ \lambda_{10}$ & $m - 10 \leftrightarrow m - 11$ & & $\sigma(y_{10}) - 1$, $\sigma(y_{11}) + 1$ \\
             \hline
             $\mu_{0}$ & $m \leftrightarrow m-1$ & \multirow{4}{*}{$\sigma(u_1) + 1$} & $\sigma(y_1) + 1, \sigma(w_0) - 1$ \\
             $ \mu_{3}$ & $m-3 \leftrightarrow m-4$ & & $\sigma(y_4) + 1, \sigma(w_3) - 1$ \\
             $ \mu_{6}$ & $m-6\leftrightarrow m-7$ & & $\sigma(y_7) + 1, \sigma(w_6) - 1$ \\
             $ \mu_{9}$ & $m-9 \leftrightarrow m - 10$ & & $\sigma(y_{10}) + 1, \sigma(w_{9}) - 1$  \\
             \hline
             $\rho_{2}$ & $m-2 \leftrightarrow m-3$ & \multirow{4}{*}{$\sigma(u_2) - 1$} & $\sigma(y_2) - 1, \sigma(w_3) + 1$ \\
             $\rho_{5}$ & $m-5 \leftrightarrow m-6$ & & $\sigma(y_5) - 1, \sigma(w_6) + 1$ \\
             $\rho_{8}$ & $m-8 \leftrightarrow m-9$ & & $\sigma(y_{8}) - 1, \sigma(w_{9}) + 1$ \\
             $ \rho_{11}$ & $m - 15 \leftrightarrow m - 12$ & & $\sigma(y_{11}) - 1, \sigma(w_{12}) + 1$ \\
             \hline
        \end{tabular}
        \caption{Exchanges in the case $i = 3$ (changes made to the value of $\sigma(r)$ are ignored).}
        \label{tab2}
    \end{table}

    \begin{itemize}
        \item \cf{u_1}{v_1}, \cf{u_2}{v_2}.

        Let us consider the $\lambda_j$ exchange, for some $j$: this yields an antimagic labelling unless $v_1 = y_j$ or $v_2 = y_{j+1}$. There are two possible issues and four possible exchanges (each issue can be associated with at most one $\lambda_j$ exchange), it is then possible to obtain an antimagic labelling.

        \item \cf{u_1}{v_1}

        Let us consider the $\mu_j$ exchange, for some $j$: this yields an antimagic labelling unless $v_1 = y_{j+1}, v_2 = y_{j+1}$ (and $\sigma(u_2) = \sigma(y_{j+1}) + 1$) or $v_2 = w_j$ (and $\sigma(u_2) = \sigma(w_j) - 1$). These last two equalities are mutually exclusive (since the values of the $\sigma(v_j)$ are spaced by at least $3$), hence there are two possible issues and four possible exchanges (each issue can be associated with at most one $\mu_j$ exchange), and we can obtain an antimagic labelling.

        \item \cf{u_2}{v_2}

        Let us consider the $\rho_j$ exchange: this yields an antimagic labelling unless $v_2 = y_j$, $v_1 = y_j$ (and $\sigma(u_1) = \sigma(y_j) - 1$), or $v_1 = w_{j+1}$ (and $\sigma(u_1) = \sigma(w_{j+1}) + 1$). These last two equalities are mutually exclusive, hence there are two possible issues and four possible exchanges (each issue can be associated with at most one $\rho_j$ exchange), and we can obtain an antimagic labelling.
        
    \end{itemize}
    
\end{itemize}

\section{Non-connected case}

In Sections 2,3, and 4, we only considered connected graphs. First notice that, if a given graph $G$ has two isolated vertices, or an isolated edge, it cannot be antimagic (we assume that if a vertex $x \in V$ is isolated, $\sigma(x) = 0$). There are two other families of non-connected graphs $G$, such that $\Delta(G) = n - 4$ and $m \geq 7n$: 

\begin{itemize}
    \item $u_3$ is an isolated vertex, and $d'(u_1) > 0$. In this case, the construction described in Section 4 is unchanged and we can obtain an antimagic labelling.
    
    \item $\{u_1,u_2,u_3\}$ induces a connected component, either a $K_3$ or a $P_3$ ($K_3$ being the clique on $3$ vertices, and $P_3$ the induced path on $3$ vertices), hence $d'(u_1) = d'(u_2) = d'(u_3) = 0$. Then $H \cup \{r\}$ is the second connected component, where $r$ is universal. Since $m \geq 7n$, $m_H \geq 6n \geq 2$. We can label the component induced by $\{u_1,u_2,u_3\}$ with the smallest labels and apply the construction described in the proof of Theorem \ref{n-1} to the other component, and so we obtain an antimagic labelling.
\end{itemize}

Hence we obtain the following result:

\begin{corollary}
    Let $G$ be a graph that contains no isolated edge and at most one isolated vertex. If $\Delta(G) = n - 4$ and $m \geq 7n$, then $G$ is antimagic.
\end{corollary}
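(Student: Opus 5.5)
The plan is to reduce the Corollary to Theorem \ref{n-4} together with the case analysis of Section 5. Let $G$ be a graph with $\Delta(G) = n-4$ and $m \geq 7n$, with no isolated edge and at most one isolated vertex. If $G$ is connected, Theorem \ref{n-4} applies directly. Otherwise, fix a vertex $r$ with $d(r) = n-4$, and let $u_1,u_2,u_3$ be its three non-neighbours, ordered as in Section 2. The connected component $C$ of $r$ contains $r$ and its $n-4$ neighbours. So every other component is contained in $\{u_1,u_2,u_3\}$, and every $u_i$ outside $C$ has $d'(u_i)=0$.

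\textbf{Enumerating the components.} The vertices outside $C$ induce a graph on at most three vertices, and this graph is a union of components. Since $G$ has no isolated edge and at most one isolated vertex, only two configurations are possible:
\begin{itemize}
\item exactly one $u_i$ is isolated and the other two lie in $C$;
\item all three $u_i$ form a single component, which must be $P_3$ or $K_3$.
\end{itemize}
Two $u_i$'s forming an isolated edge is forbidden, and two isolated vertices are forbidden. The mixed case of one isolated vertex plus an isolated edge is forbidden as well.

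In the first configuration, the isolated vertex has the smallest degree, namely $0$, and so it is $u_3$ under our ordering. Because $u_1$ and $u_2$ lie in $C$, we have $d'(u_1) > 0$. Then $d'(u_3) = 0 \leq 3$, and we fall into Section 4 with $i \in \{1,2,3\}$. I would check that each of the three labellings there only uses that $u_3$'s incident edges get the smallest labels; here there are none, so $\sigma(u_3)=0$. All other sums are positive, so $u_3$ conflicts with nothing, and the remaining arguments (the spacing properties and the exchanges $\lambda,\mu,\rho$) never used connectivity.

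In the second configuration, I would label the three or two edges among the $u_i$ with labels $1,2,3$. In $K_3$ the sums are $3,4,5$; in $P_3$ with labels $1,2$ they are $1,2,3$. Both give distinct sums. I would then apply the construction of Theorem \ref{n-1} to $H\cup\{r\}$ with the remaining labels shifted up, where $r$ is universal. Shifting all labels by a constant $c$ adds $c\cdot d(v)$ to each sum. So I would instead rerun the proof of Theorem \ref{n-1} directly with the label set $\{4,\dots,m\}$ (or $\{3,\dots,m\}$): label the edges of $H$ arbitrarily, sort by $\sigma'$, and give the largest labels to the edges at $r$. Each vertex of $H$ is adjacent to $r$ and receives a label of at least $m-n \geq 6n > 5$, so its sum exceeds those of the $u_i$'s.

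\textbf{Main obstacle.} The delicate step is verifying that the Section 4 arguments survive $d(u_3)=0$. In particular, I would check that the bounds on $\sigma(u_1)$ and $\sigma(u_2)$ and the inequality $\sigma(u_3)<\sigma(u_2)$ still hold when $u_2$ may also have small degree, and that the hypothesis $d'(u_1)\geq 4$ used in the cases $i=2,3$ is unaffected. I expect all of these to go through by inspection.
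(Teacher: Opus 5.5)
Your proposal is correct and follows essentially the same route as the paper. The connected case goes through Theorem~\ref{n-4}. An isolated $u_3$ is handled by the unchanged Section~4 construction. A $K_3$ or $P_3$ component on $\{u_1,u_2,u_3\}$ gets the smallest labels, and the construction from the proof of Theorem~\ref{n-1} is run on $H\cup\{r\}$ with the remaining labels. Your remark that this construction must be rerun on the remaining label set, rather than shifting an existing antimagic labelling, is precisely what the paper's ``apply the construction'' means.
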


\section{Conclusion}

We have proven that, for any graph $G$ such that $m \geq 7n$ and $\Delta(G) = n - 4$, $G$ is antimagic. The proof detailed in Sections 2,3,4 shows a new framework to prove the antimagicness of some graphs, applied here to graphs with large maximum degree and small average degree. It is worth noting the framework used here, with a post-processing of the labelling using exchange between labels, is somewhat similar to the one we used in \cite{beaudoire2025} to show the antimagicness of graphs with a dominating clique. It is likely that the same ideas could be used to prove the antimagicness of other classes of graphs.

For graphs with large average degree, Eccles proved the following in \cite{eccles2016}:

\begin{theorem}
    \label{ecclesed}
    There exists an absolute constant $d_0$ such that, if G is a graph with average degree at least $d_0$, and $G$ contains no isolated edge and at most one isolated vertex, then $G$ is antimagic.
\end{theorem}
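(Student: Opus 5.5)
The statement immediately above is Eccles' theorem (Theorem~\ref{ecclesed}), which the paper only quotes. A proof would be a substantial standalone argument; I would follow the probabilistic-plus-correction strategy, in the spirit of Alon \emph{et al.} and the label-exchange framework of Sections 2--4.

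I would split according to the maximum degree.

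\textbf{High maximum degree.} If $\Delta(G)$ is large relative to $n$, say $\Delta(G)\geq n-k$ for a bounded $k$ or $\Delta(G)\ge cn$, I would mimic the proof of Theorem~\ref{n-1}. Fix a set $S$ of high-degree vertices. First label the edges not incident to $S$ arbitrarily with small labels. Then assign the large labels to edges incident to $S$, in increasing order of the partial sums $\sigma'$ of their other endpoints. This separates the vertices of $N(S)$, with gaps as in Property~\ref{ecartv}. A few local exchanges, as in Section 3, then handle the vertices outside $N(S)$.

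\textbf{Bounded relative maximum degree.} The general case is where the average degree $d\ge d_0$ matters. The natural first step is to find a spanning structure with controllable sums. I would take a subgraph $F$ consisting of vertex-disjoint stars or paths covering almost all vertices, so that each vertex has a ``private'' edge or a pair of edges. Using $d\ge d_0$, one can choose such an $F$ with $|E(F)|$ small compared to $m$, since there are roughly $d/2$ edges per vertex. Then:
\begin{enumerate}
\item Label $E\setminus E(F)$ randomly with the small labels.
\item Show by concentration (Azuma or Chernoff) that the partial sums $\sigma'$ are well spread. Concretely, the number of vertices whose $\sigma'$ falls in any window of length $\ell$ is $O(\ell n/(dm)+\text{small})$.
\item Order the vertices by $\sigma'$ and give the private edges of $F$ the large labels in increasing order, so that each vertex receives a distinct increment with spacing at least $2$ or $3$. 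Spaced reserved labels then enforce distinctness, as in Property~\ref{ecartv}.
\end{enumerate}

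\textbf{Main obstacle.} Private edges have two endpoints, so one label increments two vertex sums at once. Eccles' key difficulty, which I also expect to be the hardest step, is to orient or pair $F$ so that each edge serves one ``owner'' vertex. The other endpoint's increment must then be absorbed, by making it a vertex whose sum is already far from everyone else's, for example a high-degree centre of a star. To resolve this I would take $F$ to be a collection of stars whose centres have large degree. Centres would be handled separately by the high-degree argument above: their sums are huge and can be separated by ordering. The leaves are the owners. Vertices of small degree, which may be numerous, need to be leaves of some star. Ensuring this while keeping centres' sums distinct is where the constant $d_0$ is used.

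\textbf{Exceptional configurations.} A final cleanup by label swaps resolves the $O(1)$ remaining conflicts, as in Section 3. The hypotheses of no isolated edge and at most one isolated vertex exclude the only obstructions at that stage, namely sums forced to coincide, as discussed in the non-connected case.
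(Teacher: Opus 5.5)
The paper gives no proof of this statement; it only quotes it from Eccles \cite{eccles2016}, whose argument yields $d_0\le 4182$. Your proposal therefore has to stand on its own, and as written it is an outline whose decisive step is missing.

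\textbf{Non-owner vertices.} The ordering trick of Theorem~\ref{n-1} only separates vertices that own exactly one reserved edge. It gives no control over vertices that own no private edge, namely your star centres and any uncovered vertices. Your remedy (centres of large degree whose sums are ``huge'' and are handled by the high-degree argument) is not available in general.
\begin{itemize}
\item In a $d_0$-regular graph no vertex has large degree, and at least $n/(d_0+1)$ vertices own no private edge. Their sums are sums of about $d_0$ labels, in the same range as the owners' sums. With $d_0$ constant one expects $\Theta(n)$ coincidences after a random labelling, not $O(1)$ conflicts that a final round of swaps could repair.
\item Connectivity is not assumed. Take the disjoint union of a clique $K_s$ with $s(s-1)\ge d_0n$ and $(n-s)/3$ copies of $P_3$; it satisfies every hypothesis. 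Each copy of $P_3$ has three vertices but only two edges, so linearly many vertices of degree at most $2$ own no private edge. Their sums are sums of at most two labels, and nothing in your scheme keeps them apart from each other or from the owners' sums.
\item Your step~2 is false as stated. Every vertex whose only edge is its private edge has $\sigma'=0$, and such vertices can make up almost all of $V$.
\item Per-vertex concentration with a union bound needs degrees of order $\log n$. That is exactly the $\log n$ barrier Eccles' theorem is designed to get below.
\end{itemize}

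\textbf{High-maximum-degree branch.} This branch is equally unsupported. For $\Delta(G)=n-k$, the claim that a Theorem~\ref{n-1}-type ordering plus ``a few local exchanges as in Section 3'' handles the vertices outside $N(S)$ is essentially Conjecture~\ref{conjalon}, which is open. The present paper needs all of Sections 2--4, the extra hypothesis $m\ge 7n$, and a budget of four exchanges of each type just to handle three non-neighbours of $r$. Its counting (three possible issues against four exchanges) does not scale. For $\Delta(G)\ge cn$ there are linearly many non-neighbours. Once $|S|>1$, a vertex of $N(S)$ can receive several reserved labels, so the spacing argument behind Property~\ref{ecartv} is lost. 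In any case this branch concerns vertices of degree comparable to $n$, so it cannot be invoked for star centres whose degree may be as small as $d_0$, or even $2$.

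\textbf{What is missing.} You need a mechanism that makes the sums of the non-owner vertices pairwise distinct, and distinct from the owners' sums, when most degrees are bounded. Supplying that is the real content of Eccles' proof; your proposal names this obstacle but does not overcome it.
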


Note that the proof of our construction shows a similar result over graphs with average degree at least $14$ (since we assumed $m \geq 7n$).

It is likely one can refine the result of Eccles, which applies to graphs with a very large number of vertices and a very large average degree (note that the proof described in \cite{eccles2016} induces an upper bound on the average degree $d_0$ of $4182$), to prove a result akin to the following conjecture:

\begin{conj}
    Let $k$ be a positive integer, and let $G$ be a connected graph with $\Delta(G) = n - k$. If $m \geq f(k)n$, for some positive function $f$, then $G$ is antimagic.
\end{conj}

Notice that the cases $k = 1$ and $k = 2$ are known to be true, with $f(k) = 0$ in both cases; the case $k = 3$ is also known to be true (see Theorem \ref{yilma}) with $f(k) = 0$ and $n \geq 9$. Moreover, the case $k = 4$ is proved to be true with Theorem \ref{n-4}, with $f(4) \leq 7$. Finally, Theorem \ref{ecclesed} shows that $f(k) \leq \frac{d_0}{2}$ for graphs $G$ with $n -k \geq d_0$.

Note that our construction does not require any condition over the number of vertices in the graph (contrary to Conjecture \ref{conjalon}); however, the $m \geq f(k)n$ hypothesis ($m \geq 7n$ in Theorem \ref{n-4}) does induce a (constant) lower bound over $n$ ($n \geq 16$ in Theorem \ref{n-4}).

A question that arises naturally would be to study if it is possible to show the antimagicness of 'sparse' graphs, for some definition of sparse, and ideally for graphs with $m < 7n$, thus proving the antimagicness of all graphs $G$ with $\Delta(G) = n - 4$. Another idea would be to build a framework such that the values of $\sigma(u_1)$, $\sigma(u_2)$ and $\sigma(u_3)$, meaning the labels assigned to their incident edges, depend on their respective degree to try and 'guess' where they would naturally fall among the other values of $\sigma(v), v \in H$, similarly to the work of Yilma in \cite{yilma2013}.

\bibliographystyle{elsarticle-num}
\bibliography{biblio}

\end{document}